\newtheorem{theorem}{Theorem}[section]
\newtheorem{lemma}[theorem]{Lemma}
\newtheorem{proposition}[theorem]{Proposition}
\newtheorem{corollary}[theorem]{Corollary}
\theoremstyle{definition}
\newtheorem{remark}[theorem]{Remark}
\def\ve{\varepsilon}
\title[The spatial $N$-centre problem at positive energies]{The spatial $N$-centre problem: scattering at positive energies}
\author{Alberto Boscaggin, Arthur Bottois and Walter Dambrosio}
\address{Alberto Boscaggin and Walter Dambrosio \newline \indent
Dipartimento di Matematica, Universit\`a di Torino, \newline \indent
Via Carlo Alberto 10, 10123 Torino, Italy \newline \newline \indent
Arthur Bottois \newline \indent
\'{E}cole Centrale de Lyon, \newline \indent
36 avenue Guy de Collongue, 69134 \'{E}cully, France \newline \indent
\medbreak
e-mail addresses: alberto.boscaggin@unito.it, arthur.bottois@ecl13.ec-lyon.fr, walter.dambrosio@unito.it}
\date{}
\begin{document}

\begin{abstract}
For the spatial generalized $N$-centre problem 
$$
\ddot{x} = -\sum_{i=1}^{N} \frac{m_i (x - c_i)}{\vert x - c_i \vert^{\alpha+2}},\qquad
x \in \mathbb{R}^3 \setminus \{c_1,\dots,c_N \},
$$
where $m_i > 0$ and $\alpha \in [1,2)$, we prove the existence of positive energy entire solutions with prescribed scattering angle.
The proof relies on variational arguments, within an approximation procedure via (free-time) boundary value problems. 
A self-contained appendix describing a general strategy to rule out the occurrence of collisions is also included.
\end{abstract}

\date{}
\keywords{$N$-centre problem, scattering, critical point theory, regularization of collisions.}
\subjclass{37J45, 70F05, 70F16.}

\thanks{{\bf Acknowlegments.} Work partially supported by the 
ERC Advanced Grant 2013 n. 339958
{\it Complex Patterns for Strongly Interacting Dynamical Systems - COMPAT}, by the PRIN-2012-74FYK7 Grant {\it Variational and perturbative aspects of nonlinear differential problems} and by the INDAM-GNAMPA Project \textit{Dinamiche complesse per il problema degli $N$-centri}.}

\maketitle 

\section{Introduction and statement of the main result}\label{sect1}
\setcounter{page}{1}

In this paper, we deal with \emph{positive energy solutions} of the differential equation
\begin{equation}\label{main}
\ddot{x} = -\sum_{i=1}^{N} \frac{m_i (x - c_i)}{\vert x - c_i \vert^{\alpha+2}},\qquad
x \in \mathbb{R}^3 \setminus \{c_1,\dots,c_N \},
\end{equation}
where $c_i$ are fixed vectors in $\mathbb{R}^3$ (with $c_i \neq c_j$ for $i \neq j$), 
$m_i > 0$ for all $i$ and $\alpha \in [1,2)$. Here and in the following, 
energy is meant with respect to the natural Hamiltonian structure of \eqref{main}, namely
\begin{equation}\label{def_V}
\ddot x = \nabla V(x), \qquad \mbox{ where } \quad V(x) = \sum_{i=1}^{N} \frac{m_i}{\vert x - c_i \vert^\alpha};
\end{equation}
accordingly, a solution $x$ is said to have positive energy if $\tfrac12 \vert \dot x(t) \vert^2 - V(x(t)) \equiv H$ for some $H > 0$. 

Equation \eqref{main} has to be interpreted as a generalized version of the $N$-centre problem of Celestial Mechanics, that is, the problem of the motion (in the three-dimensional space) of a test particle $x$ under the attraction of $N$ fixed heavy bodies 
$c_1,\ldots,c_N$. This corresponds to equation \eqref{main} for $\alpha = 1$, while the choices $\alpha \in (1,2)$ allows us to deal with non-Newtonian interactions as well (incidentally, recall that for $\alpha \geq 2$ the singularities satisfy the so-called strong force condition; as already remarked by Poincar\'e, in this case the problem becomes simpler, see \cite{AmbCot93}). It is trivial, but useful, to remark that when $N = 1$ equation \eqref{main} just reduces to the well known (generalized) Kepler problem, while the case $N=2$ is often referred to as Euler-Jacobi problem and is solvable as well (see \cite{Pin17,Whi59}). For $N \geq 3$, on the contrary, the problem turns out to be analitically non-integrable \cite{Bol84,BolKoz17,BolNeg01} and, in spite of its simple-looking structure, very little is known in general. 

As for negative energy solutions, one should expect a mixture of motions on KAM tori and chaotic trajectories; however, the mathematical literature in this direction is still extremely limited and the only contributions we are aware of are available for the corresponding problem in the two-dimensional space \cite{BolNeg03,Dim10,SoaTer12,Yu16}. On the other hand, positive energy solutions enjoy the property of escaping to infinity when exiting a sufficiently large ball (by an easy Lagrange-Jacobi argument, see \eqref{ineq_LJ}) and the typical problem becomes the one of \emph{scattering}, namely (very roughly speaking) studying existence and multiplicity of globally defined solutions interacting, on a finite time interval, with the set of centers and having prescribed asymptotic behavior for $t \to \pm \infty$. 
The crucial reference for this is the remarkable paper \cite{Kna02} by Knauf, using tools of perturbative nature to analyze in detail the structure of the set of scattering solutions to \eqref{main} in the high-energy regime (and for $\alpha = 1$). Therein, a non-collinearity condition on the set of the centres and a related assumption on the scattering angle are also required. We remark that all these restrictions were not needed for the corresponding analysis in the two-dimensional case \cite{KleKna92}, as a further evidence of the substantial difficulties arising when facing the spatial problem.

In our brief note, we deal with the three-dimensional scattering problem, establishing the following easy-reading result. It just provides the mere existence of one positive energy solution (from now on, \emph{hyperbolic solution}) but, on the other hand, it is valid for any choice of the centres, for any (but two) scattering angle and for any positive energy. 

\begin{theorem}\label{theo_main}
Let $N \geq 2$. For any $\xi^-,\xi^+ \in \mathbb{S}^2$ with $\xi^+ \neq \pm\xi^-$ and any $h > 0$, there exists a solution $x: \mathbb{R} \to \mathbb{R}^3 \setminus \{c_1,\dots,c_N \}$ of \eqref{main} with energy $H > 0$ such that
\begin{equation}\label{asym_x}
\lim_{t \to \pm\infty} \frac{x(t)}{\vert x(t) \vert} = \xi^{\pm}. 
\end{equation}
\end{theorem}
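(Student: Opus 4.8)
The plan is to realize $x$ as a limit of fixed-energy solutions of approximating two-point boundary value problems, produced variationally through the Maupertuis principle. Throughout, fix the prescribed energy level $h>0$, so that the solution we seek has energy $H=h$. By the Maupertuis (or Jacobi) principle, trajectories of \eqref{main} with energy $h$ correspond, after a time reparametrization, to critical points of the length functional
\[
\mathcal{L}_h(u) = \int_0^1 \sqrt{2\bigl(h+V(u)\bigr)}\,|\dot u|\,dt
\]
associated with the Jacobi metric $g_h = (h+V)\,\langle\cdot,\cdot\rangle$ on $\mathbb{R}^3\setminus\{c_1,\dots,c_N\}$, with $V$ as in \eqref{def_V}. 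Equivalently one may use the free-time formulation, minimizing $\int_0^1\bigl(\tfrac{1}{2\tau}|\dot u|^2 + \tau(h+V(u))\bigr)\,dt$ over paths $u$ and times $\tau>0$; a minimizer avoiding the centres is, up to reparametrization, a solution of \eqref{main} at energy $h$.

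For each large $R$ I would join the receding endpoints $R\xi^-$ and $R\xi^+$ and minimize $\mathcal{L}_h$ over $H^1$ paths. The delicate point, and the reason the mere existence result is confined to the spatial case with $N\ge 2$, is that an unconstrained minimizer bends \emph{away} from the centres, where the conformal factor $h+V$ is large, and degenerates as $R\to\infty$ into a nearly rectilinear trajectory that escapes every bounded region and carries the wrong asymptotic directions. To force a genuine interaction I would minimize within a class of paths anchored to the centres, exploiting $N\ge 2$ to impose a nontrivial linking with the segment joining two of the centres; this device replaces the planar winding constraints that are unavailable in the simply connected space $\mathbb{R}^3\setminus\{c_i\}$. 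The direct method, using coercivity from the fixed endpoints together with the lower bound $\sqrt{2h}\,|\dot u|$ and weak lower semicontinuity, then yields a minimizer $u_R$ in the chosen class.

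The main obstacle is to show that $u_R$ does not hit any centre, so that the Maupertuis principle applies and $u_R$ reparametrizes to a bona fide solution $x_R$ of \eqref{main}. Since $\alpha\in[1,2)$ lies \emph{below} the strong-force threshold, collision paths have finite $\mathcal{L}_h$-length and a priori cannot be excluded by a length argument alone. Here I would invoke the general strategy of the appendix: localize near a putative collision with a single centre $c_i$, use the Kepler-type asymptotics of colliding arcs obtained from a blow-up analysis of \eqref{main} near $c_i$, and construct an explicit collision-free competitor of strictly smaller length, contradicting minimality. This local surgery, rather than the global topology, is the technical heart of the argument.

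Finally I would pass to the limit $R\to\infty$. The Lagrange--Jacobi inequality \eqref{ineq_LJ} guarantees that outside a fixed large ball $|x|^2$ is strictly convex along any energy-$h$ solution, so that the interaction region of each $x_R$ stays within a bounded ball independent of $R$, and each $x_R$ leaves that ball monotonically and escapes to infinity with a well-defined asymptotic direction. Normalizing time at the closest-approach instant, the uniform energy bound together with the collision exclusion yields $C^1_{\mathrm{loc}}$ compactness away from the centres; by Ascoli--Arzel\`a I extract a limit $x\colon\mathbb{R}\to\mathbb{R}^3\setminus\{c_i\}$, an entire solution of \eqref{main} of energy $h$. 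Because the interaction core remains bounded while the endpoints recede along the rays $\mathbb{R}_+\xi^\pm$, the chord from the core to $R\xi^\pm$ has direction tending to $\xi^\pm$, and combined with the convexity-driven escape this gives $x(t)/|x(t)|\to\xi^\pm$ as $t\to\pm\infty$, that is \eqref{asym_x}. The restriction $\xi^+\neq\pm\xi^-$ is precisely what keeps the anchored minimization non-degenerate, the endpoints neither coinciding nor being antipodal through the singular region, while $N\ge 2$ is what makes the interaction-forcing possible.
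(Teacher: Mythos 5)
Your overall scheme (approximate by fixed-energy Bolza problems between $R\xi^-$ and $R\xi^+$, then let $R\to+\infty$) is the one the paper follows, but the variational device at its core does not work as you describe, and this is not a technicality. You propose to \emph{minimize} the Jacobi length in a class of paths ``anchored'' to the centres by a ``nontrivial linking with the segment joining two of the centres''. For an arc in $\mathbb{R}^3\setminus\Sigma$ with fixed endpoints there is no homotopy-invariant linking with a segment: the complement of a segment (and of finitely many points) in $\mathbb{R}^3$ is simply connected, so any class stable under homotopy is the whole path space and the constrained infimum coincides with the unconstrained one. If instead you impose the non-invariant constraint that the path meet the segment $[c_1,c_2]$ (or a winding condition relative to the full line through $c_1,c_2$), the constraint is active at the minimizer: one obtains a broken geodesic with a corner on the segment (or a path touching the non-singular part of the line), which is \emph{not} a solution of \eqref{main}, and the class is not weakly closed. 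This is exactly why the paper does not minimize: it performs a min-max over \emph{loops in the path space}, $h\in\mathcal{C}(\mathbb{S}^1,\Gamma)$, constrained by the degree conditions $\deg_{\mathbb{S}^2}(\tilde h_1)\neq 0=\deg_{\mathbb{S}^2}(\tilde h_2)$ in \eqref{def_minmax}. This produces a critical point of Morse index at most $1$ together with the two-sided level estimate \eqref{est_act}; the paper explicitly notes that minimizers do \emph{not} satisfy the lower bound in \eqref{est_act}, which is indispensable later.

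Two further steps of your argument inherit this problem. First, your collision exclusion by ``local surgery'' (a shorter collision-free competitor) is a device for minimizers; it says nothing about a min-max critical point, and in the sub-strong-force range $\alpha\in[1,2)$ the paper instead rules out collisions via Morse index estimates (for $\alpha>1$) and, for $\alpha=1$, via the Sperling-asymptotics/reflection analysis of the Appendix (Proposition \ref{dir}, Corollary \ref{cor-finalereg}) applied to limits of regularized solutions. Second, you assert that the Lagrange--Jacobi inequality \eqref{ineq_LJ} forces the interaction region of $x_R$ to stay in a ball independent of $R$. It does not: \eqref{ineq_LJ} only gives convexity of $|x_R|^2$ outside $B_K(0)$ and is perfectly compatible with $\min_t|x_R(t)|\to+\infty$. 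The uniform bound on $\min_t|x_R(t)|$ is the content of Lemma \ref{lemma-minimo}, whose proof is a genuine blow-up argument: if the minimum escapes, the rescaled trajectories converge to a free motion, and one reaches a contradiction either with $\xi^+\neq-\xi^-$ (when the rescaled minimum tends to $0$) or with the action lower bound in \eqref{est_act} (when it does not). Without the min-max level estimate this step collapses, so the hypothesis $\xi^+\neq-\xi^-$ and the inequality \eqref{est_act} enter the proof in an essential way that your proposal does not capture.
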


For the proof of Theorem \ref{theo_main}, we use a variational approach together with an approximation scheme. More precisely, first for any $R > 0$ large enough a solution $x_R$ of the two-point problem (with free-time and fixed energy)
\begin{equation}\label{bvp}
\ddot x_R = \nabla V(x_R), \qquad \frac12 \vert \dot x_R \vert^2 - V(x_R) = H, \qquad x_R(\pm\omega_R) = R\xi^{\pm}, 
\end{equation}
is found as a min-max critical point of the associated Maupertuis functional; second an entire solution $x_\infty$, having asymptotic directions $\xi^{\pm}$ for $t \to \pm\infty$, is constructed as the limit $x_\infty(t) = \lim_{R \to +\infty} x_R(t)$. It worth pointing out that the excluded situations $\xi^- = -\xi^+$ and $\xi^- = \xi^+$ are due to very different reasons: in the first case, it seems impossible to exclude that the sequence of solution $x_R$ escapes to infinity when $R \to +\infty$; in the second one, it seems impossible to exclude the presence of a collision with the set of centres. 

Blow-up arguments, Morse index estimates and regularization techniques play a crucial role in making our procedure effective. More precisely, the blow analysis takes advantage of arguments previously developed both in \cite{FelTan00} (dealing with a one-center like potential, under strong force type assumptions both at the singularity and at infinity) and in \cite{BosDamPap17,BosDamTer17} (dealing with the generalized $N$-centre problem, at the zero-energy level). On the other hand, the strategy to rule out the occurrence of collisions is inspired by the one in \cite{Tan94,Tan93} but is here sharpened by the use of the classical estimates at collisions by Sperling \cite{Spe69}: all this is carefully presented in a final Appendix, hopefully of independent interest.

We end this introduction with a final remark and an open problem. In the proof of our main result, the assumption $N \geq 2$ plays a role 
(we refer to \cite[Proposition 0.1]{FelTan00} for the scattering analysis when $N = 1$) and the solutions found are indeed believed to interact with the centres, though no explicit estimate is available. It seems reasonable that regarding 
\eqref{main} as a perturbation at infinity of the generalized Kepler problem may lead, at least for sufficiently small $H > 0$, to a distinct hyperbolic solution, having the same asymptotic directions but staying far away from the centres. Unfortunately, we have been unable to prove (or disprove) this conjecture and we leave it as a possibly interesting open question for future investigations. 

\subsection{Plan of the paper}

In the subsequent subsections of this Introduction we fix some notation and we prove some useful technical estimates and results needed throughout the paper.

In Section \ref{sec2} we deal with the Bolza problem \eqref{bvp}.

In Section \ref{sec3} we prove that the approximated solutions found in Section \ref{sec2} converge to an entire hyperbolic solution of \eqref{main} when $R\to +\infty$; moreover, we show that this solution has the desired asymptotic properties.

In a (self-contained) final Appendix, we investigate generalized solutions of a perturbed Kepler problem, collecting arguments used along the proofs of Sections \ref{sec2} and \ref{sec3} to rule out the occurrence of collisions.

\subsection{Notation}

The symbols $x \cdot y$ and $\vert  x \vert$ denote the standard Euclidean product and Euclidean norm on $\mathbb{R}^3$, $B_\rho(x)$ is the open ball of radius $\rho$ centered at $x$. The symbols $\langle u,v \rangle$ and $\Vert u \Vert$ stand for the usual scalar product and the associated norm on the Sobolev space $H^1([a,b];\mathbb{R}^3)$, namely 
$$\langle u,v \rangle = \int_a^b \left(u(t) \cdot v(t) + \dot{u}(t) \cdot \dot{v}(t) \right)\,dt,\qquad
\Vert u \Vert = \left[\int_a^b \left(\vert u(t) \vert^2 + \vert \dot{u}(t) \vert^2 \right)\,dt \right]^{1/2}.$$
Finally, $\text{j}(A)$ is the Morse-index of a self-adjoint bounded linear operator $A$ on an Hilbert space.

\subsection{Technical estimates on the potential}\label{subsect_V}

Let us define 
\begin{equation}\label{def_m_Xi}
\Sigma = \{c_1,\dots,c_N \}, \qquad \Xi = \max_i \vert c_i \vert, \qquad m = \sum_{i=1}^N m_i;
\end{equation}
without loss of generality, we assume henceforth that the center of mass is placed at the origin, namely
\begin{equation}\label{def_origin}
\sum_{i=1}^N m_i c_i = 0.
\end{equation}
Using the above notation, we collect here below
some properties of the potential $V$ (recall the definition given in \eqref{def_V}) near the centers $c_i$ and at infinity.

Precisely, as for the behavior of $V$ near the singularities, for any $i=1,\dots,N$ we write
\begin{equation}\label{V_sing}
V(x) = \frac{m_i}{\alpha \vert x - c_i \vert^\alpha} + \Phi_i(x),
\end{equation}
with $\Phi_i \in \mathcal{C}^\infty(\mathbb{R}^3 \setminus (\Sigma \setminus \{c_i \}))$. From now on, we fix a constant $\delta^* > 0$ so small that 
\begin{equation}\label{deltastar1}
B_{\delta^*}(c_i) \subset B_{\Xi + 1}(0),\; \forall i=1,\dots,N,\qquad
B_{\delta^*}(c_i) \cap B_{\delta^*}(c_j) = \emptyset,\; \forall i \neq j.
\end{equation}
Moreover, we also assume
\begin{equation}\label{deltastar2}
\frac{(2 - \alpha) m_i}{\alpha \vert x - c_i \vert^\alpha} + 2 \Phi_i(x) + \nabla \Phi_i(x) \cdot (x - c_i) \geq 0,\quad
\mbox{ for } 0 < \vert x - c_i \vert \leq \delta^*,
\end{equation}
and
\begin{equation}\label{deltastar3}
V(x) + H \leq \frac{3 m_i}{2 \alpha \vert x - c_i \vert^\alpha},\quad
\mbox{ for } 0 < \vert x - c_i \vert \leq \delta^*, 
\end{equation}
for $i=1,\dots,N$.

On the other hand, dealing with the behavior of $V$ at infinity, we set
\begin{equation}\label{V_inf}
V(x) = \frac{m}{\alpha \vert x \vert^\alpha} + W(x).
\end{equation}
Using \eqref{def_m_Xi}, we can easily see that
$$W(x) = \mathcal{O} \left(\frac{1}{\vert x \vert^{\alpha+2}} \right)\quad
\mbox{ and }\quad
\nabla W(x) = \mathcal{O} \left(\frac{1}{\vert x \vert^{\alpha+3}} \right)\quad
\mbox{ as } \vert x \vert \to +\infty.$$
As a consequence, we can chose constants $C_-,C_+ > 0$ and $K > \Xi + 1$ such that
\begin{equation}\label{K1}
\vert W(x) \vert \leq \frac{C_+}{\vert x \vert^{\alpha+2}}\quad
\mbox{ and }\quad
\vert \nabla W(x) \vert \leq \frac{C_+}{\vert x \vert^{\alpha+3}},\quad \mbox{ for every } \vert x \vert \geq K,
\end{equation}
\begin{equation}\label{K2}
\frac{(2 - \alpha) m}{\alpha \vert x \vert^\alpha} + 2 W(x) + \nabla W(x) \cdot x \geq 0,\quad \mbox{ for every } \vert x \vert \geq K,
\end{equation}
\begin{equation}\label{K3}
\frac{C_-}{\vert x \vert^\alpha} \leq V(x) \leq \frac{C_+}{\vert x \vert^\alpha},\quad \mbox{ for every } \vert x \vert \geq K,
\end{equation}
and
\begin{equation}\label{K4}
\sqrt{\frac{m}{\alpha \vert x \vert^\alpha} + H} - \frac{C_+}{\vert x \vert^{\alpha+2}} \leq \sqrt{V(x) + H} \leq \sqrt{\frac{m}{\alpha \vert x \vert^\alpha} + H} + \frac{C_+}{\vert x \vert^{\alpha+2}}
\end{equation}
for every $\vert x \vert \geq K$. The estimates \eqref{K1}, \eqref{K2} and \eqref{K3} are straightforward, while \eqref{K4} is a consequence of \eqref{K1} and of the elementary inequalities $1 - 2 \vert s \vert \leq \sqrt{1 + s} \leq 1 + \frac{s}{2}$ (valid for $s \geq -1$).

\subsection{Estimating large hyperbolic solutions}\label{subsect_LJ}

In this section we collect some preliminary estimates valid for ``large'' hyperbolic solutions of \eqref{main}. More precisely, we deal with solutions $x: [t_1,t_2] \to \mathbb{R}^3$ of \eqref{main}, with $-\infty \leq t_1 < t_2 \leq +\infty$ (in the case $t_i \in \{\pm\infty \}$, we agree that $t_i$ is not included in the interval of definition of $x$), satisfying the energy relation
\begin{equation}\label{energy}
\frac{1}{2} \vert \dot{x}(t) \vert^2 = V(x(t)) + H,\quad
\mbox{ for every } t \in [t_1,t_2],
\end{equation}
and
\begin{equation}\label{large_sol}
\vert x(t) \vert \geq K,\quad
\mbox{ for every } t \in [t_1,t_2],
\end{equation}
where $K > \Xi + 1$ is the constant fixed in Subsection \ref{subsect_V}. Writing in polar coordinates
\begin{equation}\label{polar_coor}
x(t) = r(t) s(t)
\end{equation}
with $r(t) = \vert x(t) \vert \geq K$ and $s(t) = \tfrac{x(t)}{\vert x(t) \vert} \in \mathbb{S}^2$, the energy relation reads as
\begin{equation}\label{polar_energy}
\dot{r}^2 + r^2 \vert \dot{s} \vert^2 = 2 (V(rs) + H),
\end{equation}
while the differential equation \eqref{main} becomes
\begin{equation}\label{polar_main}
\ddot{r} = r \vert \dot{s} \vert^2 + \nabla V(rs) \cdot s,\qquad
r \ddot{s} = \nabla_{\mathbb{S}^2} V(rs) - r \vert \dot{s} \vert^2 s - 2 \dot{r} \dot{s},
\end{equation}
where $\nabla_{\mathbb{S}^2} V(rs) = \nabla V(rs) - (\nabla V(rs) \cdot s) s$.

Let us define define the moment of inertia
$$I(t) = \frac{1}{2} \vert x(t) \vert^2 = \frac{1}{2} r^2(t)$$
and the angular momentum
$$A(t) = x(t) \wedge \dot{x}(t),$$
for every $t \in [t_1,t_2];$ notice that, in the coordinates \eqref{polar_coor}, we have
\begin{equation}\label{A}
\vert A(t) \vert = r^2(t) \vert \dot{s}(t) \vert. 
\end{equation}
Using the estimates \eqref{K1} and \eqref{K2} it is immediate to prove that
	\begin{equation}\label{ineq_LJ}
	\ddot{I}(t) \geq 2 H > 0\quad \mbox{ for every } t \in [t_1,t_2],
	\end{equation}
and that
	\begin{equation}\label{ineq_dA}
	\vert \dot{A}(t) \vert \leq \frac{C_+}{r^{\alpha + 2}(t)},\quad
	\mbox{ for every } t \in [t_1,t_2],
	\end{equation}
	where $C_+ > 0$ is the constant fixed in Subsection \ref{subsect_V}.

As a first consequence of \eqref{ineq_LJ}, either $r$ is strictly monotone on $[t_1,t_2]$ or 
there exists $t^* \in (t_1,t_2)$ such that $r$ is strictly decreasing on $[t_1,t^*)$ and strictly increasing on $(t^*,t_2]$.

From \eqref{ineq_LJ} and \eqref{ineq_dA} we can also establish the following results, which will be used various times in the paper.

\begin{lemma}\label{cor_t}
	Let $x: [t_1,t_2] \to \mathbb{R}^3$ be a solution of \eqref{main} satisfying \eqref{energy}-\eqref{large_sol} and assume that 
	$r$ is strictly monotone on the whole $[t_1,t_2]$. Then
	\begin{equation}\label{t_est}
	\frac{\vert r(t_2) - r(t_1) \vert}{\sqrt{2 (H + C_+/K^\alpha)}} \leq
	t_2 - t_1 \leq
	\frac{1}{\sqrt{2 H}} \max\{r(t_2),r(t_1) \}.
	\end{equation}
\end{lemma}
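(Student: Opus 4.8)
The plan is to establish the two inequalities in \eqref{t_est} independently. The lower bound on $t_2-t_1$ will come from a pointwise upper bound on the radial speed $|\dot r|$ furnished by the energy relation, while the upper bound will come from integrating the Lagrange--Jacobi inequality \eqref{ineq_LJ} twice. Note that the formula \eqref{t_est} is invariant under the substitution $t \mapsto t_1+t_2-t$, so I expect to be able to reduce the monotone case to a single sign of $\dot r$.

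For the lower bound I would start from the polar energy relation \eqref{polar_energy}, which gives $\dot r^2 \leq 2(V(rs)+H)$. Since $r(t) \geq K$ along the solution by \eqref{large_sol}, estimate \eqref{K3} yields $V \leq C_+/r^\alpha \leq C_+/K^\alpha$, and hence $|\dot r(t)| \leq \sqrt{2(H+C_+/K^\alpha)}$ for every $t \in [t_1,t_2]$. Because $r$ is strictly monotone, $\left| \int_{t_1}^{t_2} \dot r\,dt \right| = |r(t_2)-r(t_1)|$, so integrating the speed bound gives $|r(t_2)-r(t_1)| \leq \sqrt{2(H+C_+/K^\alpha)}\,(t_2-t_1)$, which is exactly the left inequality in \eqref{t_est}.

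For the upper bound I would first reduce to the case in which $r$ is strictly increasing: if instead $r$ is decreasing, I replace $x(t)$ by $x(t_1+t_2-t)$, which is again a solution of \eqref{main} by time-reversibility, whose radial part is increasing and for which both $t_2-t_1$ and $\max\{r(t_1),r(t_2)\}$ are unchanged. So assume $\dot r \geq 0$, whence $\dot I = r\dot r \geq 0$ and in particular $\dot I(t_1) \geq 0$. Integrating \eqref{ineq_LJ} once from $t_1$ gives $\dot I(t) \geq \dot I(t_1)+2H(t-t_1) \geq 2H(t-t_1)$, and integrating a second time over $[t_1,t_2]$ gives $\tfrac12 r(t_2)^2 = I(t_2) \geq I(t_2)-I(t_1) \geq H(t_2-t_1)^2$, using $I(t_1) \geq 0$. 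Rearranging yields $t_2-t_1 \leq r(t_2)/\sqrt{2H} = \max\{r(t_1),r(t_2)\}/\sqrt{2H}$, as desired.

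The only delicate point is producing the \emph{sharp} constant $\sqrt{2H}$ in the upper bound. A naive argument bounding $t_2-t_1$ by the distance travelled divided by a minimal speed, or by the endpoint values of $\dot r$, would only give the cruder constant $\sqrt{2(H+C_+/K^\alpha)}$. The clean $\sqrt{2H}$ emerges precisely from integrating the convexity inequality $\ddot I \geq 2H$ twice and from discarding the nonnegative term $\dot I(t_1)$, whose sign is forced by the monotonicity of $r$; this isolates the pure ``free-fall'' contribution $H(t_2-t_1)^2$ and is where the hypothesis of strict monotonicity is really used.
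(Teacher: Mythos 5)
Your proposal is correct and follows essentially the same route as the paper: the lower bound from the pointwise speed bound $|\dot r|\le\sqrt{2(H+C_+/K^\alpha)}$ obtained via the energy relation and \eqref{K3}, and the upper bound by integrating $\ddot I\ge 2H$ twice while using $\dot I(t_1)\ge 0$ (the paper likewise treats only the increasing case and calls the other ``analogous''). Your explicit time-reversal reduction and the remark on where the sharp constant $\sqrt{2H}$ comes from are just slightly more detailed presentations of the same argument.
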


\begin{proof}
	We give the proof when $r$ is strictly increasing (the other case being analogous). At first, notice that, in view of the previous discussion, $\dot{r}(t_1) \geq 0$ and $\dot{r}(t) > 0$ for $t \in (t_1,t_2]$. Using the fact that $x$ has energy $H$ and \eqref{K3}, we find
	$$0 < \dot{r}(t) \leq \sqrt{2 (H + C_+/K^\alpha)},\quad
	\mbox{ for every } t \in (t_1,t_2].$$
	Hence
	\begin{align*}
	t_2 - t_1 & = \int_{t_1}^{t_2} \frac{\dot{r}(t)}{\dot{r}(t)}\,dt \geq
	\frac{1}{\sqrt{2 (H + C_+/K^\alpha)}} \int_{t_1}^{t_2} \dot{r}(t)\,dt\\
	& = \frac{r(t_2) - r(t_1)}{\sqrt{2 (H + C_+/K^\alpha)}},
	\end{align*}
	thus proving the estimate from below. 
	On the other hand, using \eqref{ineq_LJ} we find, for $t \in [t_1,t_2]$, 
	$$\dot{I}(t) \geq 2 H (t - t_1).$$
	Integrating on $[t_1,t_2]$, we thus have
	$$\frac{1}{2} r^2(t_2) = I(t_2) \geq H (t_2 - t_1)^2,$$
	giving the estimate from above.
\end{proof}

\begin{lemma}\label{lem_ids}
	Let $x: [t_1,t_2] \to \mathbb{R}^3$ be a solution of \eqref{main} satisfying \eqref{energy}-\eqref{large_sol} and assume that $r$ is strictly increasing on the whole $[t_1,t_2]$. Then, for any $\tau$ with $t_1 < \tau \leq t_2$,
	\begin{equation}\label{ineq_ids}
	\int_{\tau}^{t_2} \vert \dot{s}(t) \vert\,dt \leq
	\frac{C_1 + C_2 r(t_1)}{2 H (\tau - t_1)},
	\end{equation}
	where $C_1,C_2$ are positive constants depending only on $\alpha$, $H$, $C_+$ and $K$.
\end{lemma}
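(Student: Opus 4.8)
The plan is to reduce everything to the identity $\vert \dot{s} \vert = \vert A \vert / r^2$ coming from \eqref{A}, which rewrites the quantity to be estimated as $\int_\tau^{t_2} \vert A(t) \vert \, r^{-2}(t) \, dt$. Thus it suffices to bound the angular momentum $\vert A(t) \vert$ from above by a quantity of the form $C_1 + C_2 r(t_1)$, uniformly in $t$, and then to control $\int_\tau^{t_2} r^{-2}(t)\,dt$ by $(2H(\tau - t_1))^{-1}$.

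For the first task I would write $\vert A(t) \vert \leq \vert A(t_1) \vert + \int_{t_1}^{t_2} \vert \dot{A}(\sigma) \vert \, d\sigma$. The boundary term is handled through \eqref{A} and the energy relation \eqref{polar_energy}, which give $r^2 \vert \dot{s} \vert^2 \leq 2(V+H)$; combined with \eqref{K3} this yields $\vert A(t_1) \vert = r(t_1)\bigl(r(t_1)\vert \dot{s}(t_1)\vert\bigr) \leq r(t_1)\sqrt{2(H + C_+/K^\alpha)}$, which is precisely the term linear in $r(t_1)$. The integral term is controlled by \eqref{ineq_dA}, so that it remains to show that $\int_{t_1}^{t_2} r^{-(\alpha+2)}\,d\sigma$ is bounded by a constant depending only on $\alpha, H, C_+, K$.

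The crucial ingredient — and the step on which the whole argument rests — is the quadratic lower bound $r(t) \geq \sqrt{2H}\,(t-t_1)$. This is obtained by integrating \eqref{ineq_LJ} twice: since $r$ is \emph{strictly increasing} on the whole $[t_1,t_2]$ one has $\dot{I}(t_1) = r(t_1)\dot{r}(t_1) \geq 0$ and $I(t_1) \geq 0$, whence $\dot{I}(t) \geq 2H(t-t_1)$ and then $I(t) = \tfrac12 r^2(t) \geq H(t-t_1)^2$. With this in hand $r^{-2}(t) \leq (2H)^{-1}(t-t_1)^{-2}$, and one computes directly
$$
\int_\tau^{t_2} \frac{dt}{r^2(t)} \leq \frac{1}{2H}\int_\tau^{t_2} \frac{dt}{(t-t_1)^2} \leq \frac{1}{2H(\tau-t_1)}.
$$
The same bound, together with $r \geq K$ (so that $r^{-(\alpha+2)} \leq K^{-\alpha} r^{-2}$), also closes the estimate of $\int_{t_1}^{t_2} r^{-(\alpha+2)}\,d\sigma$ from the previous step, once one deals with the non-integrability of $(t-t_1)^{-2}$ at $t_1$ by splitting off a fixed-length initial subinterval on which $r \geq K$ is used crudely; this last point is the only technical nuisance and it only affects the value of the constant $C_1$.

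Assembling the pieces gives
$$
\int_\tau^{t_2} \vert \dot{s}(t) \vert \, dt = \int_\tau^{t_2} \frac{\vert A(t) \vert}{r^2(t)}\,dt \leq \bigl(C_1 + C_2 r(t_1)\bigr)\int_\tau^{t_2} \frac{dt}{r^2(t)} \leq \frac{C_1 + C_2 r(t_1)}{2H(\tau-t_1)},
$$
which is exactly \eqref{ineq_ids}, with $C_2 = \sqrt{2(H+C_+/K^\alpha)}$ and $C_1$ the constant bounding $\int_{t_1}^{t_2}\vert \dot A\vert\,d\sigma$.
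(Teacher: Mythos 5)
Your proposal is correct and follows essentially the same route as the paper: both rest on $\vert \dot s\vert = \vert A\vert/r^2$, the bound $\vert A(t)\vert \leq C_1 + C_2\, r(t_1)$ obtained from the energy relation at $t_1$ and from integrating \eqref{ineq_dA}, and the Lagrange--Jacobi lower bound $I(t)\geq H(t-t_1)^2$. The only (harmless) difference is your handling of the non-integrability of $(t-t_1)^{-2}$ at $t_1$ by splitting off an initial subinterval, where the paper instead keeps the constant term $I(t)\geq \tfrac12 + H(t-t_1)^2$ (using $r(t_1)\geq K>1$) so that $\vert\dot A(t)\vert\leq C_+/(1+2H(t-t_1)^2)$ is integrable outright.
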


Clearly, a symmetric result can be given when $r$ is strictly decreasing on $[t_1,t_2]$.

\begin{proof}
	At first, we observe that, using \eqref{ineq_LJ} and Lemma \ref{cor_t}, we find
	\begin{align}\label{ids1}
	I(t) \geq \frac{1}{2} + H (t - t_1)^2 \geq H (t - t_1)^2\quad
	\mbox{ for every } t \in [t_1,t_2].
	\end{align}
	Now, we write \eqref{ineq_dA} as
	$$\vert \dot{A}(t) \vert \leq \frac{C_+}{2 I(t)},\quad
	\mbox{ for every } t \in [t_1,t_2].$$
	Recalling \eqref{ids1}, we find, for $t \in [t_1,t_2]$,
	$$\vert \dot{A}(t) \vert \leq \frac{C_+}{1 + 2 H (t - t_1)^2},$$
	so that
	$$\int_{t_1}^{t} \vert \dot{A}(s) \vert\,ds \leq C_1,\quad
	\mbox{ for every } t \in [t_1,t_2],$$
	where
	$$C_1 = \frac{\pi C_+}{2 \sqrt{2 H}}.$$
	Therefore, using the energy relation and \eqref{K3}, for every $t \in [t_1,t_2]$,
	$$\vert A(t) \vert \leq \vert A(t_1) \vert + \int_{t_1}^t \vert \dot{A}(s) \vert\,ds \leq  C_1 + C_2 r(t_1),$$
	where $C_2 = \sqrt{2 (H + C_+/K^\alpha)}$. Recalling \eqref{A} and \eqref{ids1}, we thus find
	$$\vert \dot{s}(t) \vert \leq \frac{C_1 + C_2 r(t_1)}{2 H (t - t_1)^2},\quad
	\mbox{ for every } t \in (t_1,t_2].$$
	Finally, we obtain, for $\tau \in (t_1,t_2]$,
	$$\int_{\tau}^{t_2} \vert \dot{s}(t) \vert\,dt \leq
	\frac{C_1 + C_2 r(t_1)}{2 H} \int_\tau^{t_2} \frac{1}{(t - t_1)^2}\,dt \leq \frac{C_1 + C_2 r(t_1)}{2 H (\tau - t_1)},$$
	and the proof is thus concluded.
\end{proof}

\section{An approximating problem}\label{sec2}

In this section we look for hyperbolic solutions with energy $H > 0$ of the (free-time) fixed-endpoints problem
\begin{equation}\label{syst_xR}
\left\{ \begin{array}{l} \vspace{0.1cm}
\ddot{x}_R = \nabla V(x_R),\\
x_R(\pm\omega_R) = R \xi^\pm,
\end{array} \right.
\end{equation}
with $V$ defined in \eqref{def_V} and $R>K$. Solutions of \eqref{syst_xR} can be seen as approximated solutions of entire hyperbolic solutions of \eqref{main}; the goal of this section is to construct solutions of \eqref{syst_xR} that converge to entire solutions of \eqref{main} as $R\to +\infty$. 
\smallbreak
More precisely, we are going to state and prove the following result. In the statement, we employ the notation
$$\mathcal{A}_{[a,b]}(x) = \int_a^b \left(\frac{1}{2} \vert \dot{x}(t) \vert^2 + V(x(t)) + H \right)\,dt$$
for any $x \in H^1([a,b];\mathbb{R}^3)$. As well known, if $\bar{x}: [a,b] \to \mathbb{R}^3 \setminus \Sigma$ is a (non-collision) solution of $\ddot{x} = \nabla V(x)$, then $\bar{x}$ is a critical point of the (action) functional $\mathcal{A}_{[a,b]}$ on the domain $\{x \in H^1([a,b];\mathbb{R}^3 \setminus \Sigma)\, :\, x(a) = \bar{x}(a), x(b) = \bar{x}(b) \}$.

\begin{theorem}\label{theo_act}
	Let $K > \Xi + 1$ be the constant given in Subsection \ref{subsect_V}. Then, for any $R > K$ and for any $\xi^-,\xi^+ \in \mathbb{S}^2$ with $\xi^+ \neq \pm\xi^-$, there exist $\omega_R > 0$ and a hyperbolic solution $x_R$ with energy $H > 0$ of \eqref{syst_xR} satisfying 
	\begin{equation}\label{est_jA}
	\textnormal{j}(d^2 \mathcal{A}_{[-\omega_R,\omega_R]}(x_R)) \leq 1
	\end{equation}
	and
	\begin{equation}\label{est_act}
	2 \sqrt{2 H} (R - K) \leq \mathcal{A}_{[-\omega_R,\omega_R]}(x_R) \leq 2 \sqrt{2} F_K(R) + M,
	\end{equation}
	where $M > 0$ is a suitable constant not depending on $R$ and
	$$F_K(R) := \int_K^R \sqrt{\frac{m}{\alpha r^\alpha} + H}\,dr,\quad \mbox{ for every }  R>K.$$
\end{theorem}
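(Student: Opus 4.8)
The plan is to obtain $x_R$ as a time‑reparametrised critical point of a fixed‑energy (Maupertuis) action, selected by a one‑parameter min‑max so as to control both the action and the Morse index. For $\omega>0$ and $u\in H^1([-1,1];\R^3)$ with $u([-1,1])\cap\Sigma=\emptyset$ and $u(\pm1)=R\xi^\pm$, I introduce the free‑time functional
$$\mathcal A_R(\omega,u)=\frac{1}{2\omega}\int_{-1}^{1}|\dot u|^2\,dt+\omega\int_{-1}^{1}\bigl(V(u)+H\bigr)\,dt,$$
so that, setting $x_R(t):=u(t/\omega)$ on $[-\omega,\omega]$, one has $\mathcal A_R(\omega,u)=\mathcal A_{[-\omega,\omega]}(x_R)$. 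A pair $(\omega,u)$ is a critical point with positive value if and only if $x_R$ solves \eqref{syst_xR} with energy $H$: stationarity in $u$ yields $\ddot x_R=\nabla V(x_R)$, while stationarity in $\omega$ is exactly the energy normalisation $\tfrac12|\dot x_R|^2=V(x_R)+H$, which in particular fixes $\omega_R$. On such a solution $\mathcal A_{[-\omega_R,\omega_R]}(x_R)$ equals the Jacobi length $\int\sqrt{2(V+H)}\,|dx_R|$, the object to be bounded in \eqref{est_act}. Finally, since $\partial^2_{\omega}\mathcal A_R>0$, eliminating $\omega$ is a non‑degenerate reduction whose Schur‑complement computation gives $\mathrm{j}(d^2\mathcal A_{[-\omega_R,\omega_R]}(x_R))\le \mathrm{j}(d^2\mathcal A_R(\omega_R,u_R))$; hence it suffices to produce a critical point of $\mathcal A_R$ of min‑max type with index at most one.

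For the min‑max I would exploit $\xi^+\neq\pm\xi^-$: the two directions span a plane $\Pi$, and $R\xi^-,R\xi^+$ are joined, inside $\{|x|\ge K\}$, by two distinct arcs of the radius‑$R$ circle of $\Pi$. Building on these two reference configurations (here $N\ge2$ is used to have enough room to realise the relevant deformations away from $\Sigma$), I consider the mountain‑pass class $\Gamma$ of continuous families joining them and set $c_R=\inf_{\eta\in\Gamma}\max_{s}\mathcal A_R(\eta(s))$. The upper bound in \eqref{est_act} follows by evaluating $\mathcal A_R$ along the explicit ``deep'' competitor obtained by concatenating the radial segment from $R\xi^-$ to $K\xi^-$, an arc on $\{|x|=K\}$ (which avoids $\Sigma$, since all centres lie in $B_{\Xi+1}(0)\subset B_K(0)$), and the radial segment from $K\xi^+$ to $R\xi^+$; by \eqref{K4} each radial piece contributes a Jacobi length at most $\sqrt2\,F_K(R)+\mathrm{const}$ and the bounded arc an $R$‑independent amount, so $c_R\le 2\sqrt2\,F_K(R)+M$. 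The lower bound reflects that the selected trajectory must penetrate the ball $\overline{B_K(0)}$: this is the delicate feature of the spatial setting (absent in \cite{KleKna92}), and once it is secured — i.e.\ $\min_t|x_R(t)|\le K$ — the relations $r(\pm\omega_R)=R$ and $\mathcal A_{[-\omega_R,\omega_R]}(x_R)\ge\sqrt{2H}\int|\dot x_R|\,dt\ge 2\sqrt{2H}\,(R-K)$ yield $c_R\ge 2\sqrt{2H}\,(R-K)$. The index estimate \eqref{est_jA} is then the standard output of a one‑parameter min‑max, transported to $\mathcal A_{[-\omega_R,\omega_R]}$ by the reduction above.

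The heart of the matter — and the step I expect to be the main obstacle — is the compactness and regularity analysis at the level $c_R$: one has to verify a Palais–Smale condition and, above all, to show that the min‑max critical point carries \emph{no collision}. Since $\alpha<2$ violates the strong‑force condition, a path meeting $\Sigma$ may still have finite action, so collisions cannot be excluded by the size of $\mathcal A_R$ alone. The mechanism I would use is precisely the Morse‑index bound: a generalised (collision) solution must have Morse index strictly larger than one, contradicting \eqref{est_jA}. Making this rigorous requires a blow‑up analysis at the collision instant combined with Sperling's asymptotic estimates \cite{Spe69}, in the spirit of \cite{Tan93,Tan94}; this is exactly the content deferred to the Appendix. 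The constraint $\xi^+\neq\xi^-$ prevents the two endpoints from collapsing and thereby forcing the pass through a centre, while $\xi^+\neq-\xi^-$ excludes the antipodal degeneracy in which the perihelion is pushed outward and the trajectory tends to escape; both keep the min‑max level in the ``interacting'' regime. Once collisions are ruled out, $(\omega_R,u_R)$ is a genuine critical point, $x_R$ is a classical solution of \eqref{syst_xR} with energy $H$, and $c_R=\mathcal A_{[-\omega_R,\omega_R]}(x_R)>0$ closes the argument together with the two displayed estimates.
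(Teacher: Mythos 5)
Your overall architecture (fixed-energy variational principle, a min-max level carrying Morse index at most one, level estimates, collisions excluded via index bounds and Sperling's estimates) matches the paper's, but there is a genuine gap at the single most important point: the choice of the min-max class. You take a mountain-pass family of paths joining the two arcs of the radius-$R$ circle in the plane of $\xi^\pm$, and then assert that ``the selected trajectory must penetrate $\overline{B_K(0)}$''. With your class this is not secured by any topology: the region $\{\vert x\vert\ge K\}$ is simply connected, so its space of paths from $R\xi^-$ to $R\xi^+$ is path-connected and the two reference arcs can be joined by an admissible family (for instance the pencil of circular arcs on the sphere $\{\vert x\vert=R\}$) that never meets $B_K(0)$, let alone $\Sigma$. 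Consequently the lower bound $c_R\ge 2\sqrt{2H}(R-K)$ --- which you correctly identify as the crux, and which is exactly what is needed in Section \ref{sec3} to prevent the approximating solutions from escaping to infinity --- is unproved, and there is no reason the critical point produced by your class interacts with the centres at all. The paper's class is designed precisely to repair this: it consists of \emph{loops} $h\in\mathcal{C}(\mathbb{S}^1,\Gamma)$ subject to the degree condition $\text{deg}_{\mathbb{S}^2}(\tilde h_1)\neq 0=\text{deg}_{\mathbb{S}^2}(\tilde h_2)$ relative to two distinct centres (this is where $N\ge 2$ enters --- not, as you suggest, to ``have room'' for deformations); this linking forces every admissible family to contain a path meeting the segment $[c_1,c_2]\subset B_K(0)$, from which \eqref{min_leq_K} and the lower bound follow.

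Two further points. First, since $\alpha\in[1,2)$ violates the strong-force condition, the unpenalized functional need not satisfy the Palais--Smale condition near the collision set, so the min-max cannot be run directly on your $\mathcal{A}_R$ (or on $\mathcal{M}_0$): the paper first perturbs to $V_\beta=V+\beta\widetilde V$ with a strong-force term supported near $\Sigma$, obtains critical points $u_\beta$ with index $\le 1$ for each $\beta>0$, and only then passes to the limit $\beta\to 0^+$, excluding collisions of the limit by the index bound (for $\alpha>1$) or the regularization of the Appendix (for $\alpha=1$). Your proposal skips this approximation entirely, and without it even the existence of a critical point at the level $c_R$ is in doubt. Second, for the upper bound in \eqref{est_act} a single ``deep'' competitor path is not enough for a min-max level: you must exhibit an entire admissible family in the chosen class whose \emph{maximum} action is bounded by $2\sqrt2\,F_K(R)+M$, which is what the construction referred to in \cite[Sect. 4.4]{BosDamTer17} provides.
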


The most crucial part of Theorem \ref{theo_act}, in view of the rest of the paper, is the fact that the solution $x_R$ satisfies the level estimate \eqref{est_act}. This estimate is fundamental to show the convergence of $x_R$ to an entire solution of \eqref{main}; we observe that it is not fulfilled by minimizing solutions to \eqref{syst_xR}. Hence, as we will see below, $x_R$ has to be found as a critical point with nontrivial (but not greater than one) Morse index, via a suitable min-max procedure.

\smallbreak 
The proof of Theorem \ref{theo_act} follows the same lines of the one of Theorem 4.1 in \cite{BosDamTer17}, where parabolic solutions are concerned. The hyperbolic case is somehow easier, due to presence of a positive energy $H>0$ in the action functional; in what follows we only give a sketch of the main steps, referring to \cite{BosDamTer17} for the missing details.
\smallbreak
Let us define a modified potential $V_\beta$, for $\beta \in [0,1]$, by setting
$$V_\beta(x) = V(x) + \beta \widetilde V(x),\quad x \in \mathbb{R}^3 \setminus \Sigma,$$
where $\widetilde V \in \mathcal{C}^\infty(\mathbb{R}^3 \setminus \Sigma)$ is defined as 
$$\widetilde V(x) = \sum_{i=1}^N \frac{m_i}{\vert x - c_i \vert^2} \Psi(\vert x - c_i \vert^2),$$
with $\Psi \in \mathcal{C}^\infty(\mathbb{R}^+;[0,1])$ a cut-off function such that $\Psi(r) = 1$ if $0 \leq r \leq \delta^*$ and $\Psi(r) = 0$ if $r \geq 2 \delta^*$, with $\delta^* > 0$ given by \eqref{deltastar1}.
We then introduce the Maupertuis functional
$$
\mathcal{M}_\beta(u) = \int_{-1}^1 \vert \dot{u}(t) \vert^2\,dt \int_{-1}^1 \Big(V_\beta(u(t)) + H \Big)\,dt
$$
defined on the Hilbert manifold
$$
\Gamma = \Big\{u \in H^1([-1,1];\mathbb{R}^3 \setminus \Sigma)\, :\, u(\pm 1) = R \xi^\pm \Big\}.
$$
As well-known (see, for instance, \cite[Theorem 4.1]{AmbCot93} and \cite[Appendix B]{SoaTer13}), $\mathcal{M}_\beta$ is smooth and any critical point $u_\beta \in \Gamma$ satisfies, for $t \in [-1,1]$, 
\begin{equation}\label{ubeta}
\ddot{u}_\beta(t) = \omega_\beta^2 \nabla V_\beta(u_\beta(t)),\qquad \frac{1}{2} \vert \dot{u}_\beta(t) \vert^2 = \omega_\beta^2 \Big(V_\beta(u_\beta(t)) + H \Big),
\end{equation}
where 
\begin{equation}\label{def_wbeta}
\omega_\beta = \left(\frac{\int_{-1}^1 \vert \dot{u}_\beta \vert^2}{2 \int_{-1}^1 (V_\beta(u_\beta) + H)} \right)^{1/2}.
\end{equation}
Notice that, since $\xi^+ \neq \xi^-$, $u_\beta$ is not constant: as a consequence, $\omega_\beta > 0$ and the function
\begin{equation}\label{def_xbeta}
x_\beta(t) = u_\beta \left(\frac{t}{\omega_\beta} \right),\quad t \in [-\omega_\beta,\omega_\beta],
\end{equation}
is a hyperbolic solution with energy $H$ of $\ddot{x}_\beta = \nabla V_\beta(x_\beta)$ on the interval $[-\omega_\beta,\omega_\beta]$ and, of course, $x_\beta(\pm\omega_\beta) = R\xi^\pm$.

We will look for critical points of ${\mathcal M}_\beta$ of min-max type; to this aim, for any $h \in \mathcal{C}(\mathbb{S}^1,\Gamma)$ and for $i = 1,2$, set
$$\tilde{h}_i: \mathbb{S}^1 \times [-1,1] \to \mathbb{S}^2,\qquad
(s,t) \mapsto \frac{h(s)(t) - c_i}{\vert h(s)(t) - c_i \vert}.$$
Since $h(s)(\pm 1) = R\xi^\pm$ for any $s \in \mathbb{S}^1$, the map $\tilde{h}_i$ can be identified with a continuous self-map on $\mathbb{S}^2$ and so it has a well-defined degree $\text{deg}_{\mathbb{S}^2}(\tilde{h}_i)$ \cite{GraDug03}. We can thus define the class 
\begin{equation}\label{def_minmax}
\Lambda = \Lambda_{R} = \Big\{h \in \mathcal{C}(\mathbb{S}^1,\Gamma)\, :\, \text{deg}_{\mathbb{S}^2}(\tilde{h}_1) \neq 0 = \text{deg}_{\mathbb{S}^2}(\tilde{h}_2) \Big\} 
\end{equation}
(it is clear that this set is non-empty) and the associated min-max value
\begin{equation}\label{def_minmaxval}
c_\beta = c_{\beta,R} = \inf_{h \in \Lambda} \sup_{s \in \mathbb{S}^1} \mathcal{M}_\beta(h(s)).
\end{equation}
Observing that ${\mathcal M}_\beta$ has good compactness properties (both at infinity and near the singular set, compare with \cite[Lemma 4.2]{BosDamTer17}), it is possible to prove the following result.

\begin{proposition}\label{prop_minmax}
	For any $\beta > 0$, $c_\beta$ is a critical value for the functional $\mathcal{M}_\beta$. In particular, there exists $u_\beta = u_{\beta,R} \in \Gamma$ such that
	$$\mathcal{M}_\beta(u_\beta) = c_\beta,\quad
	\nabla \mathcal{M}_\beta(u_\beta) = 0,\quad
	\textnormal{j} \left(d^2 \mathcal{M}_\beta(u_\beta) \right) \leq 1.$$
\end{proposition}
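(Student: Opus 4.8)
The plan is to run a by-now-standard minimax argument based on a quantitative deformation lemma, taking the degree-constrained family $\Lambda$ as the minimax class for the $C^2$ functional $\mathcal{M}_\beta$ on the Hilbert manifold $\Gamma$. Two points are nonstandard: (i) verifying the Palais--Smale condition at the level $c_\beta$, which for $\beta > 0$ is exactly where the modification $\widetilde V$ earns its keep, and (ii) upgrading mere existence of a critical point to the Morse-index bound $\textnormal{j}(d^2\mathcal{M}_\beta(u_\beta)) \leq 1$, which is the genuinely delicate step. Throughout, the argument follows \cite[Theorem 4.1 and Lemma 4.2]{BosDamTer17}, so I would mostly adapt rather than reinvent.

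Since $\Lambda$ is nonempty, $c_\beta < +\infty$ is well defined. The core analytic input is compactness: I would show that any Palais--Smale sequence $(u_n) \subset \Gamma$ for $\mathcal{M}_\beta$ at level $c_\beta$ is precompact in $H^1$. Two bad scenarios must be ruled out. Escape to infinity is excluded by the fixed endpoints $u(\pm 1) = R\xi^\pm$ together with the growth of $V_\beta + H$, which yield an a priori bound on $\int_{-1}^1 |\dot u|^2$ and keep $\int_{-1}^1 (V_\beta(u) + H)$ bounded below away from $0$. Collisions are prevented precisely by the cut-off term $\widetilde V$: for $\beta > 0$, near each $c_i$ one has $V_\beta(x) \sim \beta\, m_i\, |x - c_i|^{-2}$, so that $V_\beta$ satisfies the strong-force (Gordon) condition and the action of any trajectory approaching $\Sigma$ diverges. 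Hence a Palais--Smale sequence at the finite level $c_\beta$ stays uniformly away from $\Sigma$, and standard arguments give strong convergence to a critical point. This is the content of \cite[Lemma 4.2]{BosDamTer17}, which I would adapt essentially verbatim.

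With the Palais--Smale condition in hand, I would invoke the deformation lemma, the only nontrivial verification being that the associated pseudo-gradient flow leaves $\Lambda$ invariant. This is where the topological constraint is robust: the flow stays in $\Gamma$, hence uniformly away from $\Sigma$, so along the flow each map $\tilde h_i$ varies continuously with values in $\mathbb{S}^2$, and therefore its degree $\text{deg}_{\mathbb{S}^2}(\tilde h_i)$ is preserved. Thus the deformed family remains in $\Lambda$, and the usual contradiction argument (were $c_\beta$ not critical, one could strictly lower $\sup_{s} \mathcal{M}_\beta(h(s))$ below $c_\beta$ on a near-optimal $h \in \Lambda$) shows that $c_\beta$ is a critical value, producing $u_\beta \in \Gamma$ with $\mathcal{M}_\beta(u_\beta) = c_\beta$ and $\nabla\mathcal{M}_\beta(u_\beta) = 0$.

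The remaining, and hardest, step is the Morse-index estimate. The heuristic is that the minimax is carried out over families parametrized by the one-dimensional manifold $\mathbb{S}^1$, so one expects a critical point whose negative eigenspace is at most one-dimensional. I would make this rigorous via the quantitative minimax-with-index machinery used in \cite{BosDamTer17} (ultimately of Lazer--Solimini / Ghoussoub type): were every critical point at level $c_\beta$ to satisfy $\textnormal{j}(d^2\mathcal{M}_\beta) \geq 2$, then near the (compact, by Palais--Smale) set of such critical points the unstable cone would have dimension $\geq 2$, and one could deform any near-optimal $h \in \Lambda$ across this obstruction so as to push $\sup_{s}\mathcal{M}_\beta(h(s))$ strictly below $c_\beta$, contradicting the definition of $c_\beta$ as an infimum. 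The main obstacle is executing this deformation compatibly with the degree constraint defining $\Lambda$: one must guarantee that pushing the one-parameter family off the high-index critical set alters neither $\text{deg}_{\mathbb{S}^2}(\tilde h_1)$ nor $\text{deg}_{\mathbb{S}^2}(\tilde h_2)$, which is precisely what forces the bound $\leq 1$ (matching the dimension of $\mathbb{S}^1$) rather than a sharper value. Granting this, $u_\beta$ satisfies $\textnormal{j}(d^2\mathcal{M}_\beta(u_\beta)) \leq 1$, completing the proof.
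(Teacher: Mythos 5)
Your proposal is correct and follows essentially the same route as the paper, which itself only sketches this step by appealing to the compactness properties of $\mathcal{M}_\beta$ and deferring to \cite[Lemma 4.2 and Theorem 4.1]{BosDamTer17}: Palais--Smale via the strong-force behaviour of $\beta\widetilde V$ near $\Sigma$ together with the fixed endpoints and $V_\beta+H\geq H>0$, invariance of $\Lambda$ under the deformation flow by homotopy invariance of the degree, and the Morse-index bound $\textnormal{j}\leq 1$ from the one-dimensional parameter space $\mathbb{S}^1$ via the standard min-max-with-index machinery. In fact your write-up supplies more detail than the paper does at this point.
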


Now, passing to the limit for $\beta \to 0^+$, following \cite[Sect. 4.2]{BosDamTer17}, we deduce the existence of $\omega_R>0$ and of a generalized solution $x_R\in \mathcal{C}([-\omega_R,\omega_R];\mathbb{R}^3)$ of \eqref{syst_xR}, meaning that (compare with the the Appendix) the collision set $Z_R : =x_R^{-1}(\Sigma)$ of $x_R$ has measure zero, $x_R \in \mathcal{C}^\infty([-\omega_R,\omega_R] \setminus Z_R;\mathbb{R}^3 \setminus \Sigma)$ and, for any 
	$t \in [-\omega_R,\omega_R] \setminus Z_R$, it holds that
	$$\ddot{x}_R(t) = \nabla V(x_R(t)),\qquad \frac{1}{2} \vert \dot{x}_R(t) \vert^2 = V(x_R(t)) + H.$$
Moreover, it can be shown that $x_R$ is indeed a true solution of \eqref{syst_xR}, i.e. $E_R=\emptyset$: when $\alpha>1$ this follows exactly as in \cite{BosDamTer17} using the Morse index bound (see also \cite{Tan93}); if $\alpha=1$ the occurrence of collisions can be excluded by a regularization argument (see Corollary \ref{cor-finalereg} and recall that $\xi^+\neq \xi^-$).

\smallbreak
Finally, as far as the level estimate \eqref{est_act} is concerned, we observe that the estimate from above can be obtained as in \cite[Sect. 4.4]{BosDamTer17}, while the estimate from below requires a sligthly different argument. To this aim, as a first step we notice that for any $u \in \Gamma_{R \xi^\pm}$ satisfying
\begin{equation}\label{min_leq_K}
\min_t \vert u(t) \vert \leq K,
\end{equation}
it holds that
\begin{equation}\label{ineq_maup}
\sqrt{\mathcal{M}_0(u)} \geq 2 \sqrt{H} (R - K).
\end{equation}
Indeed, from \eqref{min_leq_K} we deduce the existence of $t^-,t^+ \in (-1,1)$ such that $\vert u(t^\pm) \vert = K$ and $\vert u(t) \vert \geq K$ for $t \in [-1,t^-] \cup [t^+,1]$. Now, we introduce the notation
$$\mathcal{L}(u) = \int_{-1}^1 \vert \dot{u}(t) \vert \sqrt{V(u(t)) + H}\,dt.$$
Writing $r(t) = \vert u(t) \vert$, we obtain
\begin{align*}
\sqrt{\mathcal{M}_0(u)} \geq \mathcal{L}(u) & \geq \sqrt{H} \int_{-1}^1 \vert \dot{u}(t) \vert\,dt\\
& \geq \sqrt{H} \left( \int_{-1}^{t^-} \vert \dot{u}(t) \vert\,dt + \int_{t^+}^1 \vert \dot{u}(t) \vert\,dt \right)\\
& \geq \sqrt{H} \Big(\vert u(t^-) - u(-1) \vert + \vert u(1) - u(t^+) \vert \Big)\\
& \geq 2 \sqrt{H} (R - K).
\end{align*}
To conclude, we observe that the definition of the homotopy class $\Lambda_R$ implies that for any $h \in \Lambda_{R}$, there exists $s_h \in \mathbb{S}^1$ such that $h(s_h)([-1,1]) \cap [c_1,c_2] \neq \emptyset$; in particular, $h(s_h)$ satisfies \eqref{min_leq_K}. Hence
$$\sup_{s \in \mathbb{S}^1} \sqrt{\mathcal{M}_0(h(s))} \geq 2 \sqrt{H} (R - K)$$
and
\[
c_0\geq 2 \sqrt{H} (R - K).
\]
On the other hand, defining $u_R:[-1,1]\to \mathbb{R}^3$ by $u_R(t)=x_R(\omega_R t)$, for every $t\in [-1,1]$,
a simple argument (see \cite[Remark 4.7]{BosDamTer17}) shows that ${\mathcal M}_0(u_R)=c_0$, thus implying 
\[
{\mathcal M}_0(u_R)\geq 2 \sqrt{H} (R - K).
\]
Recalling the well-known equality
\[
\sqrt{\mathcal{M}_0(u_R)} = \frac{1}{\sqrt{2}} \mathcal{A}_{[-\omega_R,\omega_R]}(x_R),
\]
we deduce
$$\mathcal{A}_{[-\omega_R,\omega_R]}(x_R) \geq 2 \sqrt{2 H} (R - K),$$
as desired.

\section{Looking for entire solutions} \label{sec3}

In this section we show that the solution $x_R$ of \eqref{syst_xR} given in Theorem \ref{theo_act} converges, when $R\to +\infty$, to an entire hyperbolic solution of \eqref{main} with asymptotic directions $\xi^{\pm}$, thus proving Theorem \ref{main}.

\subsection{A preliminary result}\label{sec31}

We prove in the Lemma below the crucial property of the sequence of approximating solutions $x_R$: that is, the minimum of $|x_R|$ is bounded in $R$.

\begin{lemma}
	\label{lemma-minimo}
	Let $x_R$ be given by Theorem \ref{theo_act}; then
	\begin{equation} \label{eq-minimofinito}
	\limsup_{R\to +\infty} \min_t |x_R(t)|<+\infty.
	\end{equation}
\end{lemma}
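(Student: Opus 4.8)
The plan is to argue by contradiction, exploiting the fact that a trajectory which stays very far from the origin is essentially a free (straight-line) motion, whose length is too short to be compatible with the lower bound in \eqref{est_act}. Suppose that \eqref{eq-minimofinito} fails, so that, along some sequence $R \to +\infty$, $\rho_R := \min_t \vert x_R(t) \vert \to +\infty$. For $R$ large we then have $\rho_R \geq K$, so that $x_R$ satisfies \eqref{energy}-\eqref{large_sol} on the whole of $[-\omega_R,\omega_R]$. By \eqref{ineq_LJ} the moment of inertia $I = \tfrac12 r^2$ is strictly convex; since $r(-\omega_R)=r(\omega_R)=R$ and $x_R$ is non-constant, $r$ attains a unique interior minimum $\rho_R$ at some $t^* \in (-\omega_R,\omega_R)$, being strictly decreasing before and strictly increasing after. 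Moreover, integrating $\ddot I \geq 2H$ from $t^*$ (where $\dot I(t^*)=0$) gives the quadratic-in-time lower bound
\[
r^2(t) \geq \rho_R^2 + 2H (t - t^*)^2, \qquad t \in [-\omega_R,\omega_R].
\]

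The key step is to show that $x_R$ is nearly a uniform rectilinear motion. Since $\vert x_R \vert \geq \rho_R$ throughout, the structure of $V$ near infinity (see \eqref{V_inf} and \eqref{K1}) gives $\vert \nabla V(x_R(t)) \vert \leq C\, r^{-(\alpha+1)}(t)$ for a constant $C$. Combining this with the quadratic lower bound and the substitution $u = \sqrt{2H}\,(t-t^*)/\rho_R$, the total turning of the velocity is controlled by
\[
\int_{-\omega_R}^{\omega_R} \vert \ddot{x}_R \vert\,dt = \int_{-\omega_R}^{\omega_R} \vert \nabla V(x_R) \vert\,dt \leq \frac{C}{\sqrt{2H}\,\rho_R^{\alpha}} \int_{\R} \frac{du}{(1+u^2)^{(\alpha+1)/2}} =: \ve_R,
\]
and $\ve_R \to 0$ since $\alpha \geq 1$. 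Hence $\vert \dot{x}_R(t) - \dot{x}_R(t^*) \vert \leq \ve_R$ for all $t$. Writing $\ell_R := \int_{-\omega_R}^{\omega_R} \vert \dot{x}_R \vert\,dt$ for the Euclidean length and using $\int_{-\omega_R}^{\omega_R} \dot{x}_R\,dt = R(\xi^+ - \xi^-)$, whose norm equals $2R \sin(\theta/2)$ with $\theta := \arccos(\xi^- \cdot \xi^+) \in (0,\pi)$, a direct comparison yields $\ell_R \leq 2R\sin(\theta/2) + 4\omega_R \ve_R$. Since Lemma \ref{cor_t}, applied to each monotone radial branch, gives $2\omega_R \leq 2R/\sqrt{2H}$, the error term is $4\omega_R \ve_R = o(R)$, so that $\ell_R \leq 2R \sin(\theta/2) + o(R)$.

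It remains to contradict \eqref{est_act}. Using the energy relation (so that $\tfrac12\vert\dot x_R\vert^2 + V + H = \vert \dot x_R \vert^2$) together with the upper bound $V(x_R) \leq C_+/\rho_R^\alpha$ coming from \eqref{K3}, we estimate
\[
\mathcal{A}_{[-\omega_R,\omega_R]}(x_R) = \int_{-\omega_R}^{\omega_R} \vert \dot{x}_R \vert^2\,dt \leq \sqrt{2\big(C_+/\rho_R^\alpha + H\big)}\;\ell_R \leq 2\sqrt{2H}\,R\sin(\theta/2) + o(R),
\]
where the last step uses $\rho_R \to +\infty$. On the other hand, the lower bound in \eqref{est_act} gives $\mathcal{A}_{[-\omega_R,\omega_R]}(x_R) \geq 2\sqrt{2H}\,(R-K)$. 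Subtracting, we obtain $2\sqrt{2H}\,R\big(1 - \sin(\theta/2)\big) \leq 2\sqrt{2H}\,K + o(R)$, which is impossible for $R$ large because $1 - \sin(\theta/2) > 0$ is a fixed positive constant. This contradiction proves \eqref{eq-minimofinito}. I expect the main obstacle to be the rigorous justification of the near-free approximation $\ell_R \leq 2R\sin(\theta/2) + o(R)$, namely the bookkeeping of the $o(R)$ error terms through the turning estimate and Lemma \ref{cor_t}; it is also precisely here that the hypothesis $\xi^+ \neq -\xi^-$ enters, since it forces $\theta < \pi$ and hence $\sin(\theta/2) < 1$, whereas for $\xi^+ = -\xi^-$ the chord length degenerates to $2R$ and the above dichotomy collapses.
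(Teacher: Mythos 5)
Your proof is correct, and it follows a genuinely different route from the paper's. The paper argues by blow-up and compactness: setting $d = \lim \rho_R/R$, it rescales $x_R$ at scale $\rho_R$ (if $d=0$) or $R$ (if $d>0$) and passes to a $\mathcal{C}^2_{\mathrm{loc}}$ limit which is a free motion; for $d=0$ the contradiction comes from Lemma \ref{lem_ids}, which forces the asymptotic directions of the limiting straight line to be $\xi^\pm$ and hence $\xi^+=-\xi^-$, while for $d\in(0,1]$ it comes from computing $\lim \mathcal{A}_{[-\omega_R,\omega_R]}(x_R)/R = 2\sqrt{2H}\sqrt{1-d^2} < 2\sqrt{2H}$ against the lower bound in \eqref{est_act}. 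You replace all of this by a single quantitative estimate: the total turning $\int \vert \ddot x_R\vert\,dt = O(\rho_R^{-\alpha})$ (which is exactly where the convexity bound $r^2 \geq \rho_R^2 + 2H(t-t^*)^2$ and the decay $\vert\nabla V\vert \lesssim r^{-(\alpha+1)}$ enter), whence $\ell_R \leq 2R\sin(\theta/2) + o(R)$ and $\mathcal{A}_{[-\omega_R,\omega_R]}(x_R) \leq 2\sqrt{2H}\,R\sin(\theta/2)+o(R)$, contradicting \eqref{est_act} uniformly in all cases since $\sin(\theta/2)<1$. I have checked the bookkeeping you were worried about ($\ve_R = O(\rho_R^{-\alpha})$ with a convergent integral for every $\alpha>0$, $2\omega_R \leq 2R/\sqrt{2H}$ from Lemma \ref{cor_t}, and the elementary comparison $\ell_R \leq \vert\!\int\dot x_R\vert + 4\omega_R\ve_R$), and it goes through. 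Your version is more elementary and self-contained (no blow-up, no case distinction, no use of Lemma \ref{lem_ids}), and it isolates transparently the role of $\xi^+\neq-\xi^-$; what the paper's heavier machinery buys is reusability, since the same blow-up scheme and Lemma \ref{lem_ids} are needed anyway in Section \ref{sec32} (Claims 4 and 5) and in the collision analysis.
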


\begin{proof}
Assume by contradiction that
$$\rho_R := \min_t \vert x_R(t) \vert = \vert x_R(\tau_R) \vert \to +\infty\quad \mbox{ as } R \to +\infty.$$
In particular, we can always suppose $\rho_R \geq K$; then, Lemma \ref{cor_t} is applicable and we obtain
\begin{equation}\label{est_omegaR}
\begin{aligned}
\omega_R - \tau_R & \geq \frac{R - \rho_R}{\sqrt{2(H + C_+/K^\alpha)}},\\
-\omega_R - \tau_R & \leq -\frac{R - \rho_R}{\sqrt{2(H + C_+/K^\alpha)}}.
\end{aligned}
\end{equation}
Let us set
$$d_R = \frac{\rho_R}{R} \in (0,1],\qquad d = \lim_{R \to +\infty} d_R \in [0,1],$$
and we distinguish two cases.
\medbreak
If $d = 0$, we define
$$v_R(t) = \frac{1}{\rho_R} x_R(\rho_R t + \tau_R),\quad t \in [\sigma_R^-,\sigma_R^+],$$
where
$$\sigma_R^\pm = \frac{\pm\omega_R - \tau_R}{\rho_R}.$$
Notice that $\vert v_R(0) \vert = 1$, $v_R(0) \cdot \dot{v}_R(0) = 0$ and $1 \leq \vert v_R(t) \vert \leq R/\rho_R$ for $t \in  [\sigma_R^-,\sigma_R^+]$. Writing $V$ as in \eqref{V_inf}, the function $v_R$ satisfies
$$\ddot{v}_R = -\frac{m v_R}{\rho_R^\alpha \vert v_R \vert^{\alpha+2}} + \rho_R \nabla W(\rho_R v_R)$$
and
$$\frac{1}{2} \vert \dot{v}_R \vert^2 = \frac{m}{\alpha \rho_R^\alpha \vert v_R \vert^\alpha} + W(\rho_R v_R) + H.$$
Moreover, from \eqref{est_omegaR} we obtain
\begin{align*}
\sigma_R^+ & \geq \frac{1/d_R - 1}{\sqrt{2 (H + C_+/K^\alpha)}} \to +\infty,\\
\sigma_R^- & \leq \frac{1 - 1/d_R}{\sqrt{2 (H + C_+/K^\alpha)}} \to -\infty,
\end{align*}
as $R \to +\infty$. Finally, using \eqref{K1} we find
\begin{equation}\label{CV_vR1}
\begin{aligned}
\vert \ddot{v}_R \vert & \leq \frac{m}{\rho_R^\alpha \vert v_R \vert^{\alpha+1}} + \rho_R \vert \nabla W(\rho_R v_R) \vert\\
& \leq \frac{m}{\rho_R^\alpha \vert v_R \vert^{\alpha+1}} + \frac{C_+}{\rho_R^{\alpha+2} \vert v_R \vert^{\alpha+3}}\\
& \leq \frac{m}{\rho_R^\alpha} + \frac{C_+}{\rho_R^{\alpha+2}} \to 0
\end{aligned}
\end{equation}
and
\begin{equation}\label{CV_vR2}
\begin{aligned}
\left\vert \frac{1}{2} \vert \dot{v}_R \vert^2 - H \right\vert & \leq \frac{m}{\alpha \rho_R^\alpha \vert v_R \vert^\alpha} + \vert W(\rho_R v_R) \vert\\
& \leq \frac{m}{\alpha \rho_R^\alpha \vert v_R \vert^\alpha} + \frac{C_+}{\rho_R^{\alpha+2} \vert v_R \vert^{\alpha+2}}\\
& \leq \frac{m}{\alpha \rho_R^\alpha} + \frac{C_+}{\rho_R^{\alpha+2}} \to 0
\end{aligned}
\end{equation}
as $R \to +\infty$, uniformly in $t$. We can thus readily see
that $v_R \to v_\infty$ in $\mathcal{C}_\text{loc}^2(\mathbb{R})$, with $v_\infty$ an entire hyperbolic solution with energy $H$ of the problem $\ddot{v}_\infty = 0$. Therefore, if we denote $v_\infty(0) = e_1$, $\dot{v}_\infty(0) = \sqrt{2 H} e_2$ with $\vert e_1 \vert = \vert e_2 \vert = 1$ and $e_1 \cdot e_2 = 0$, we have $v_\infty(t) = e_1 + \sqrt{2 H} t e_2$. As a consequence, with the notation $s_{v_\infty} = v_\infty/\vert v_\infty \vert$, we obtain
\begin{equation}\label{lim_svinf}
s_{v_\infty}(t) = \frac{e_1 + \sqrt{2 H} t e_2}{\sqrt{1 + 2 H t^2}} \to \pm e_2\quad \mbox{ as } t \to \pm\infty.
\end{equation}
Then, for any $\varepsilon > 0$, we choose $t_\varepsilon > 0$ such that
$$\frac{C_1 + C_2}{2 H t_\varepsilon} < \frac{\varepsilon}{2},$$
where $C_1, C_2$ are given by Lemma \ref{lem_ids}. 
For any $t > t_\varepsilon$, the $\mathcal{C}_\text{loc}^2(\mathbb{R})$ convergence ensures that 
$\vert s_{v_R}(t) - s_{v_\infty}(t) \vert < \varepsilon/2$ if $R$ is large enough. Using Lemma \ref{lem_ids} with $t_1 = \tau_R$, $\tau = \rho_R t_\varepsilon + \tau_R$ and $t_2 = \omega_R$, we have
\begin{align*}
\int_{t_\varepsilon}^{\sigma_R^+} \vert \dot{s}_{v_R}(t) \vert\,dt & = \int_{t_\varepsilon}^{\sigma_R^+} \vert \dot{s}_{x_R}(\rho_R t + \tau_R) \vert \rho_R\,dt = \int_{\rho_R t_\varepsilon + \tau_R}^{\omega_R} \vert \dot{s}_{x_R}(t) \vert\,dt\\
& \leq \frac{C_1 + C_2 \rho_R}{2 H \rho_R t_\varepsilon} \leq \frac{C_1 + C_2}{2 H t_\varepsilon} < \frac{\varepsilon}{2}.
\end{align*}
Therefore
\begin{align*}
\vert s_{v_\infty}(t) - \xi^+ \vert & \leq \vert s_{v_\infty}(t) - s_{v_R}(t) \vert + \vert s_{v_R}(t) - s_{v_R}(\sigma_R^+) \vert\\
& < \frac{\varepsilon}{2} + \int_t^{\sigma_R^+} \vert \dot{s}_{v_R}(\sigma) \vert\,d\sigma \leq \frac{\varepsilon}{2} + \int_{t_\varepsilon}^{\sigma_R^+} \vert \dot{s}_{v_R}(\sigma) \vert\,d\sigma\\
& < \frac{\varepsilon}{2} + \frac{\varepsilon}{2} = \varepsilon.
\end{align*}
The limit as $t \to -\infty$ being analogous, we thus derive
$$\lim_{t \to \pm\infty} s_{v_\infty}(t) = \xi^\pm$$
and recalling \eqref{lim_svinf}, we find $\xi^+ = e_2 = -\xi^-$, contradiction.
\medbreak
We now focus on the case $d \in (0,1]$. Let us define
$$\tilde{v}_R(t) = \frac{1}{R} x_R(R t + \tau_R),\quad t \in [\tilde{\sigma}_R^-,\tilde{\sigma}_R^+],$$
where
$$\tilde{\sigma}_R^\pm = \frac{\pm\omega_R - \tau_R}{R}.$$
The function $\tilde{v}_R$ satisfies
$$\ddot{\tilde{v}}_R = -\frac{m \tilde{v}_R}{R^\alpha \vert \tilde{v}_R \vert^{\alpha+2}} + R \nabla W(R \tilde{v}_R)$$
and
$$\frac{1}{2} \vert \dot{\tilde{v}}_R \vert^2 = \frac{m}{\alpha R^\alpha \vert \tilde{v}_R \vert^\alpha} + W(R \tilde{v}_R) + H.$$
Moreover, $\vert \tilde{v}_R(0) \vert = d_R$, $\tilde{v}_R(0) \cdot \dot{\tilde{v}}_R(0) = 0$, $\tilde{v}_R(\tilde{\sigma}_R^\pm) = \xi^\pm$ and $d_R \leq \vert \tilde{v}_R(t) \vert \leq 1$ for $t \in  [\tilde{\sigma}_R^-,\tilde{\sigma}_R^+]$. Finally, similarly as in \eqref{CV_vR1} and \eqref{CV_vR2},
\begin{equation}\label{CV_vR3}
\begin{aligned}
\vert \ddot{\tilde{v}}_R \vert & \leq \frac{m}{R^\alpha (d/2)^{\alpha+1}} + \frac{C_+}{R^{\alpha+2} (d/2)^{\alpha+3}},\\
\left\vert \frac{1}{2} \vert \dot{\tilde{v}}_R \vert^2 - H \right\vert & \leq \frac{m}{\alpha R^\alpha (d/2)^\alpha} + \frac{C_+}{R^{\alpha+2} (d/2)^{\alpha+2}},
\end{aligned}
\end{equation}
for $R$ large enough.

We now claim that $\tilde{\sigma}_R^+ - \tilde{\sigma}_R^-$ is bounded away from zero. Indeed, if $\tilde{\sigma}_R^- \to 0^-$ and $\tilde{\sigma}_R^+ \to 0^+$, then from
\begin{equation}\label{lim_vR}
\xi^\pm = \tilde{v}_R(\tilde{\sigma}_R^\pm) = \tilde{v}_R(0) + \int_0^{\tilde{\sigma}_R^\pm} \dot{\tilde{v}}_R(t)\,dt,
\end{equation}
together with the fact that $\max_t \vert \dot{\tilde{v}}_R(t) \vert $ is bounded in $R$ in view of \eqref{CV_vR3}, we obtain $\tilde{v}_R(0) \to \xi^-$ and $\tilde{v}_R(0) \to \xi^+$, which is not possible since $\xi^+ \neq \xi^-$.

As a consequence, there exists a nontrivial interval $\tilde{I}_\infty = (\tilde{\sigma}_\infty^-,\tilde{\sigma}_\infty^+)$ such that $\tilde{v}_R \to \tilde{v}_\infty$ in $\mathcal{C}_\text{loc}^2(\tilde{I}_\infty)$; moreover, $d \leq \vert \tilde{v}_\infty(t) \vert \leq 1$ for $t \in \tilde{I}_\infty$ and $\tilde{v}_\infty$ is a hyperbolic solution with energy $H$ of the problem $\ddot{\tilde{v}}_\infty = 0$. 
Since $\tilde{v}_\infty$ is bounded, we deduce that the interval $\tilde I_\infty$ is bounded; passing to the limit in \eqref{lim_vR}, 
we thus have that $\tilde{v}_\infty$ is a hyperbolic solution with energy $H$ of the (free-time) fixed-endpoints problem
$$\left\{ \begin{array}{l} \vspace{0.1cm}
\ddot{\tilde{v}}_\infty = 0,\\
\tilde{v}_\infty(\tilde{\sigma}_\infty^\pm) = \xi^\pm.
\end{array} \right.$$
Again, we write $\tilde{v}_\infty(t) = d e_1 + \sqrt{2 H} t e_2$ with $\vert e_1 \vert = \vert e_2 \vert = 1$ and $e_1 \cdot e_2 = 0$; and with the endpoint conditions, we infer
$$\tilde{\sigma}_\infty^\pm = \pm\frac{\sqrt{1 - d^2}}{\sqrt{2 H}}.$$
\smallbreak
If $d = 1$, we have an immediate contradiction because $\tilde{\sigma}_\infty^\pm = 0$.
\smallbreak
If $d \in (0,1)$, using the fact that $x_R$ has energy $H$, we write
\begin{align*}
\mathcal{A}_{[-\omega_R,\omega_R]}(x_R) & = 2 \int_{-\omega_R}^{\omega_R} \Big(V(x_R(t)) + H \Big)\,dt = 2 R \int_{\tilde{\sigma}_\infty^-}^{\tilde{\sigma}_\infty^+} \Big(V(R \tilde{v}_R(s)) + H \Big)\,ds\\
& = 2 R \int_{\tilde{\sigma}_\infty^-}^{\tilde{\sigma}_\infty^+} \left(\frac{m}{\alpha R^\alpha \vert \tilde{v}_R(s) \vert^\alpha} + R W(R \tilde{v}_R(s)) + H \right)\,ds
\end{align*}
so that, using \eqref{CV_vR3},
$$\lim_{R \to +\infty} \frac{\mathcal{A}_{[-\omega_R,\omega_R]}(x_R)}{R} = 2 H \left(\tilde{\sigma}_\infty^+ - \tilde{\sigma}_\infty^- \right) < 2 \sqrt{2 H}.$$
On the other hand, using Theorem \ref{theo_act} we find
$$\lim_{R \to +\infty} \frac{\mathcal{A}_{[-\omega_R,\omega_R]}(x_R)}{R} \geq 2 \sqrt{2 H},$$
so that a contradiction is obtained.
\end{proof}

\subsection{Passing to the limit}\label{sec32}

We are now in position to prove that a suitable translate of $x_R$ converges to an entire hyperbolic solution of \eqref{main} having asymptotic directions $\xi^\pm$ at $\pm \infty$. 

\smallbreak
To this aim, let $r_R = \vert x_R \vert$ and $K_R = \min_t r_R(t)$. The discussion after \eqref{ineq_LJ} implies that:
\begin{itemize}
\item[i)] if $K_R \geq K$, the function $r_R$ has a unique minimum point $t_R$,
\item[ii)] if $K_R < K$, there exist two unique instants $t_R^-,t_R^+$ with 
$t_R^- < t^+_R$ such that $r_R(t_R^{\pm}) = K$, $r_R(t) \leq K$ for $t \in [t_R^-,t_R^+]$, 
$$
r_R(t) > K \quad \mbox{ and } \quad \dot r_R(t) \neq 0, \; \mbox{ for } t \notin [t_R^-,t_R^+].
$$
\end{itemize}
In the first case, we define $t^+_R=t^-_R=t_R$; also, we introduce the constant $\tilde{K}_R = \max \{K,K_R \}$ and we observe that \eqref{eq-minimofinito} guarantees the existence of $\tilde{K} \geq K$ such that $\tilde{K}_R \leq \tilde{K}$ for any (large) $R$. 

We finally define
\begin{equation}\label{xR_trans}
\tilde{x}_R(t) = x_R \left(t + \frac{t_R^- + t_R^+}{2} \right), \qquad t \in [\omega_R^-,\omega_R^+],
\end{equation}
where 
$$\omega_R^- = -\omega_R - t_R^- - \Delta_R,\quad 
\omega_R^+ = \omega_R - t_R^+ + \Delta_R,\quad 
\Delta_R = \frac{t_R^+ - t_R^-}{2}.$$
The reason for this time-translation is that the time spent by the function $\tilde x_R$ inside the ball of radius $K$ is now the symmetric interval around the origin $[-\Delta_R,\Delta_R]$. 
\smallbreak
Now, we split the proof in some steps.
\smallbreak
\noindent
\underline{Claim 1:} it holds that 
\begin{equation}\label{eq-limsup_tR}
\limsup_{R \to +\infty}\Delta_R< +\infty.
\end{equation}
\smallbreak
\noindent
To prove this, we first use the conservation of the energy to write $\mathcal{A}_{[-\omega_R,\omega_R]}(x_R)$ as follows:
\begin{align*}
\mathcal{A}_{[-\omega_R,\omega_R]}(x_R) & = \sqrt{2} \int_{-\omega_R}^{t_R^-} \vert \dot{x}_R(t) \vert \sqrt{V(x_R(t)) + H}\,dt + 2 \int_{t_R^-}^{t_R^+} \Big(V(x_R(t)) + H \Big)\,dt\\
&\quad + \sqrt{2} \int_{t_R^+}^{\omega_R} \vert \dot{x}_R(t) \vert \sqrt{V(x_R(t)) + H}\,dt.
\end{align*}
Now, we simply estimate
$$\int_{t_R^-}^{t_R^+} \Big(V(x_R(t)) + H \Big)\,dt \geq H \left(t_R^+ - t_R^- \right),$$
and, using the monotonicity of $r_R$ for $t \notin [t_R^-,t_R^+]$ together with \eqref{K4}, we have
\begin{align*}
\int_{t_R^+}^{\omega_R} \vert \dot{x}_R \vert \sqrt{V(x_R) + H}\,dt & \geq \int_{t_R^+}^{\omega_R} \sqrt{\frac{m}{\alpha r_R^\alpha} + H} \vert \dot{r}_R \vert\,dt - \int_{t_R^+}^{\omega_R} \frac{C_+}{r_R^{\alpha+2}} \vert \dot{r}_R \vert\,dt\\
& = \int_K^R \sqrt{\frac{m}{\alpha r^\alpha} + H}\,dr - \int_K^R \frac{C_+}{r^{\alpha+2}}\,dr\\
& \geq F_K(R) - \frac{C_+}{\alpha + 1}.
\end{align*}
The same estimate holds also for $\int_{-\omega_R}^{t_R^-} \vert \dot{x}_R \vert \sqrt{V(x_R) + H}\,dt$. So, summing up, we can see that
$$2 H \left(t_R^+ - t_R^- \right) \leq \mathcal{A}_{[-\omega_R,\omega_R]}(x_R) - 2 \sqrt{2} F_K(R) + \frac{2 \sqrt{2} C_+}{\alpha + 1}.$$
Recalling the estimate from above in \eqref{est_act}, we conclude.
\smallbreak
\noindent
\underline{Claim 2:} it holds that $\omega^+_R\to +\infty$ and $\omega^-_R\to -\infty$, as $R\to +\infty$.
\smallbreak
\noindent
It is immediate to see that this fact is proved if we show that
\begin{equation} \label{eq-tempiill}
\omega_R - t_R^+ \to +\infty,\quad -\omega_R - t_R^- \to -\infty,\quad R\to +\infty.
\end{equation}
This follows from Lemma \ref{cor_t} with $t_1 = t_R^+$ and $t_2 = \omega_R$: indeed, we have
$$\omega_R - t_R^+ \geq
\frac{R - \tilde{K}_R}{\sqrt{2 (H + C_+/K^\alpha)}} \geq
\frac{R - \tilde{K}}{\sqrt{2 (H + C_+/K^\alpha)}},$$
whence the conclusion (for $-\omega_R - t_R^-$ the argument is the same).
\smallbreak
\noindent
\underline{Claim 3:} there exists a $H^1_{\textnormal{loc}}$-function $x_\infty: \mathbb{R} \to \mathbb{R}^3$ such that, for $R \to +\infty$,
$$\tilde{x}_R \to x_\infty\quad \mbox{ weakly in } H^1_\text{loc}(\mathbb{R}).$$
\smallbreak
\noindent
To prove this, we first observe that the same argument used to prove \eqref{eq-limsup_tR} shows that $\int_{-\Delta_R}^{\Delta_R} \vert 
\dot{\tilde{x}}_R(t) \vert^2 \,dt$ is bounded.
From this, together with \eqref{eq-limsup_tR} itself and the fact that $\tilde r_R(t) \leq K$ for $t \in [-\Delta_R,\Delta_R]$, we infer that 
$\int_{-\Delta_R}^{\Delta_R} \vert \tilde x_R(t) \vert^2 + \vert \dot{\tilde{x}}_R(t) \vert^2 \,dt$ is bounded as well. 
Using moreover the fact that the three quantities
$$\vert \tilde{x}_R(\pm\Delta_R) \vert,\quad
\vert \dot{\tilde{x}}_R(\pm\Delta_R) \vert = \sqrt{2 (V(\tilde{x}_R(\pm\Delta_R)) + H)}
$$
and
$$
\max_{t \notin [-\Delta_R,\Delta_R]} \vert \ddot{\tilde{x}}_R(t) \vert = \max_{t \notin [-\Delta_R,\Delta_R]} \vert \nabla V(\tilde{x}_R(t)) \vert
$$
are obviously bounded, we conclude that $\tilde x_R$ is bounded in $H^1_{\textnormal{loc}}(\mathbb{R})$ and a standard compactness argument gives the conclusion. Notice that, by \eqref{eq-limsup_tR} again, 
$\vert x_\infty(t) \vert \to \infty$ for $t \to \pm \infty$.
\smallbreak
\noindent
\underline{Claim 4:} writing $x_\infty = r_\infty s_\infty$, it holds that
\begin{equation}\label{asym_sinf}
\lim_{t \to \pm\infty } s_\infty(t) = \xi^\pm.
\end{equation}
\smallbreak
\noindent
We prove only the limit relation for $t \to +\infty$ (the other being analogous). Let $\Delta>0$ be such that
\[
\Delta_R\leq \Delta, \quad \forall \ R>K.
\]
For any $\varepsilon > 0$, we fix a $t_\varepsilon > \Delta$ such that
$$\frac{C_1 + C_2 \tilde{K}}{2 H (t_\varepsilon - \Delta)} < \frac{\varepsilon}{2},$$
where the constants $C_1,C_2$ are the ones in Lemma \ref{lem_ids}. Using the usual notation $\tilde{x}_R = \tilde{r}_R \tilde{s}_R$, from Lemma \ref{lem_ids} with the choices $t_1 = \Delta_R$, $\tau = t_\varepsilon$ and $t_2 = \omega_R^+$, we have that
$$\int_{t_\varepsilon}^{\omega_R^+} \vert \dot{\tilde{s}}_R(t) \vert\,dt \leq \frac{C_1 + C_2 \tilde{K}_R}{2 H (t_\varepsilon - \Delta_R)} \leq \frac{C_1 + C_2 \tilde{K}}{2 H (t_\varepsilon - \Delta)} < \frac{\varepsilon}{2}$$
for $R$ large enough. We are now in position to conclude. Indeed, for any $t > t_\varepsilon$ let us take $R$ so large that $\vert \tilde{s}_R(t) - s_\infty(t) \vert < \varepsilon/2$ (following from the convergence in Claim 3). Then
\begin{align*}
\vert s_\infty(t) - \xi^+ \vert & \leq \vert s_\infty(t) - \tilde{s}_{R}(t) \vert + \vert \tilde{s}_{R}(t) - \tilde{s}_R(\omega_R^+) \vert\\
& < \frac{\varepsilon}{2} + \int_t^{\omega_R^+} \vert \dot{\tilde{s}}_R(\sigma) \vert\,d\sigma \leq \frac{\varepsilon}{2} + \int_{t_\varepsilon}^{\omega_R^+} \vert \dot{\tilde{s}}_R(\sigma) \vert\,d\sigma\\
& < \frac{\varepsilon}{2} + \frac{\varepsilon}{2} = \varepsilon, 
\end{align*}
thus proving \eqref{asym_sinf}.
\smallbreak
\noindent
\underline{Claim 5:} $x_\infty$ is collision-free, namely $x_\infty(t) \notin \Sigma$ for any $t \in \mathbb{R}$. 
\smallbreak
\noindent
To prove this, we distinguish two cases depending on the value of $\alpha$.

In the case $\alpha \in (1,2)$, we argue as in \cite[Sect. 5.2]{BosDamTer17}. To give a sketch (and assuming for instance that $x_\infty(t) = c_1$ for some $t$) one defines the 
function $v_R$ by
$$v_R(t) = \frac{1}{\delta_R} \left(x_R \left(\delta_R^{1+\alpha/2} t + \tau_R \right) - c_1 \right),\qquad t \in [\sigma_R^-,\sigma_R^+],$$
where
$$
\delta_R = \min_t \vert x_R(t) - c_1 \vert \quad \mbox{ and } \quad \sigma_R^\pm = \frac{\tau_R^\pm - \tau_R}{\delta_R^{1+\alpha/2}},$$
and, using \eqref{deltastar2}-\eqref{deltastar3}, proves that $v_R \to v_\infty$ for $R \to +\infty$ in 
$\mathcal{C}_\text{loc}^2(\mathbb{R})$, with $v_\infty$ an entire zero-energy solution of the problem
$$\ddot{v}_\infty = -\frac{m_1 v_\infty}{\vert v_\infty \vert^{\alpha+2}}.$$
The same arguments of \cite{BosDamTer17} can then be used to show that the above convergence forces the Morse index of $x_R$ to be greater than a quantity $i(\alpha)$ such that $i(\alpha) \geq 2$ when $\alpha > 1$. Therefore, a contradiction with \eqref{est_jA} is obtained.

In the case $\alpha = 1$, we use the arguments in the Appendix. Precisely, from Corollary \eqref{cor-finalereg} we know that $x_\infty$ must be a collision-reflection solution near any of its possible collisions. But this contradicts the global property of being an unbounded solution with different asymptotic directions $\xi^\pm$.

\section*{Appendix}

In this Appendix, we describe a strategy to investigate the behavior of ``generalized solutions'' to \eqref{main} (when $\alpha = 1$, this being the most delicate case), so as to eventually rule out the occurrence of collision. We do not claim any originality in the forthcoming results, which are probably well known by experts in Celestial Mechanics; however, we hope it can be of some interest to collect them in the present form, since no appropriate reference in the literature seems to exist.

Throughout this section, we deal with the perturbed Kepler equation
\begin{equation}\label{kep}
\ddot q = - \frac{\mu q}{\vert q \vert^3} + \nabla U(q),
\end{equation}
where $\mu > 0$ and $U$ is a $\mathcal{C}^\infty$-function defined on some open set $\Omega \subset \mathbb{R}^3$ containing the origin; we will be interested in solutions to \eqref{kep} possibly taking the value $q = 0$. Notice that \eqref{main} can be written in the above form, setting (for some $i=1,\ldots,N$) $q = x - c_i,$ $\mu=m_i$ and $\Omega = \mathbb{R}^3 \setminus \cup_{j\neq i}\{c_j - c_i\}$; of course, such a choice leads to investigations about solutions colliding with the centre $c_i$.

Following \cite{BahRab89} we call \emph{generalized solution} to \eqref{kep} a continuous function
$q: I \subset \mathbb{R} \to \Omega$ (with $I \subset \mathbb{R}$ an interval) such that:
\begin{itemize}
\item[-] the set $Z := q^{-1}(0)$ of collisions has zero measure, 
\item[-] on $I \setminus Z$, the function $q$ is of class $\mathcal{C}^\infty$ and solves equation \eqref{kep} therein,
\item[-] the energy is preserved through collisions, i.e., there exists $h \in \mathbb{R}$ such that
$$
\frac{1}{2}\vert \dot q(t) \vert^2 - \frac{\mu}{\vert q(t) \vert} - U(q(t)) = h
$$
for any $t \in I \setminus Z$.
\end{itemize}

This is a very weak notion of solution; in order to restrict the attention to ``physically meaningful'' solutions, the incoming and outgoing collision directions at $t_0 \in Z$, namely
$$
\lim_{t \to t_0^-} \frac{q(t)}{\vert q(t) \vert} \qquad \mbox{ and } \qquad \lim_{t \to t_0^+} \frac{q(t)}{\vert q(t) \vert}, 
$$
play a role. Indeed, roughly speaking, a solution can be considered physically meaningful (that is, from a mathematical point of view, converted to a solution of a suitable regularized equation) if and only if the collision directions coincide (compare with \cite{BosOrtZhaPP}, where the more general situation of a time-dependent perturbation $U(t,q)$ is also discussed, and the equality between the collision directions is indeed incorporated in the definition of generalized solution). Actually, for such solutions the behavior is very simple: they are just reflected back after collision. We give here below a proof of this fact; it is worth mentioning that our arguments just rely on the classical Sperling estimates \cite{Spe69}, thus avoiding typical three-dimensional regularization techniques (like Kustaanheimo-Stiefel one, see for instance \cite{Wal08}).

\begin{proposition}\label{dir}
Let $q: (-\varepsilon,\varepsilon) \to \Omega$ be a generalized solution to \eqref{kep} with $q^{-1}(0) = \{0\}$.
Assume further that the limit 
$$
\lim_{t \to 0} \frac{q(t)}{\vert q(t) \vert} 
$$
exists. Then, $q$ is a collision-reflection solution to \eqref{kep}, i.e., 
$$
q(t) = q(-t), \quad \mbox{ for every } t \in (0,\varepsilon). 
$$
\end{proposition}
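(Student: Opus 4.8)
The plan is to use the time-reversibility of \eqref{kep} to turn the reflection property into a uniqueness statement for collision-ejection solutions, and then to settle this uniqueness by means of the Sperling asymptotics. First I would set $\hat{q}(t) := q(-t)$ for $t \in (-\varepsilon,\varepsilon)$. Since the right-hand side of \eqref{kep} does not involve $\dot{q}$, the function $\hat{q}$ is again a generalized solution of \eqref{kep}, with the same isolated collision at $t = 0$, the same energy $h$, and the same collision direction $\xi := \lim_{t \to 0} q(t)/\vert q(t) \vert$ (the limit being two-sided). Hence both $q$ and $\hat{q}$, restricted to $[0,\varepsilon)$, are solutions \emph{ejected} from the collision at $t=0$ with the same direction $\xi$ and the same energy $h$, and the identity $q(t) = q(-t)$ on $(0,\varepsilon)$ is equivalent to $q \equiv \hat{q}$ on $[0,\varepsilon)$. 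In other words, it suffices to prove that the ejection solution is uniquely determined by its collision direction and its energy.

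I would then collect the asymptotic information coming from the Sperling estimates \cite{Spe69}. Since $\frac{d}{dt}(q \wedge \dot{q}) = q \wedge \nabla U(q) \to 0$ and $\vert q \wedge \dot{q} \vert \leq \vert q \vert \, \vert \dot{q} \vert \to 0$ at the collision, the angular momentum vanishes at $t = 0$; combined with the existence of $\xi$, this gives the leading behaviour $\vert q(t) \vert \sim (9\mu/2)^{1/3} \vert t \vert^{2/3}$ and, via energy conservation, $\vert \dot{q}(t) \vert \sim \sqrt{2\mu/\vert q(t) \vert}$, the radial velocity dominating the tangential one. In particular the dominant part of $\dot{q}$ is $-\sqrt{2\mu/\vert q \vert}\,\xi$ as $t \to 0^-$ and $+\sqrt{2\mu/\vert q \vert}\,\xi$ as $t \to 0^+$. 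Writing $w := q - \hat{q}$ on $(0,\varepsilon)$, the equality $\dot{w}(t) = \dot{q}(t) + \dot{q}(-t)$ shows that these two singular contributions cancel, so that $w(t) \to 0$ \emph{and} $\dot{w}(t) \to 0$ as $t \to 0^+$.

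Next I would analyse the equation for $w$, namely
\[
\ddot{w} = -\mu\left(\frac{q}{\vert q \vert^3} - \frac{\hat{q}}{\vert \hat{q} \vert^3}\right) + \big(\nabla U(q) - \nabla U(\hat{q})\big).
\]
Linearizing the Kepler term through $D\big(q/\vert q \vert^3\big) = \vert q \vert^{-3}\big(I - 3\, q \otimes q/\vert q \vert^2\big)$ and inserting $\vert q \vert^3 \sim (9\mu/2)\, t^2$, the variational equation decouples, to leading order, into a radial part $\ddot{w}_r \approx \frac{4}{9 t^2}\, w_r$ and a tangential part $\ddot{w}_\perp \approx -\frac{2}{9 t^2}\, w_\perp$, the term $\nabla U(q) - \nabla U(\hat{q}) = O(\vert w \vert)$ being negligible by comparison. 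The corresponding Euler indicial exponents are $\{4/3, -1/3\}$ in the radial direction and $\{2/3, 1/3\}$ in the tangential one. The condition $\dot{w} \to 0$ excludes every mode except the radial $t^{4/3}$; and since this remaining degree of freedom alters the energy at leading order, the fact that $q$ and $\hat{q}$ carry the \emph{same} energy $h$ forces it to vanish as well. Therefore $w \equiv 0$, which is precisely the reflection $q(t) = q(-t)$.

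The hard part will be to make the last paragraph rigorous: the coefficient $1/t^2$ is exactly of critical (Euler) scaling, so a naive Gronwall argument on $w$ does not close. To turn the modal dichotomy into a proof one must work separately on the radial and tangential components, use the full Sperling expansions (not merely their leading terms) to control both the nonlinear remainder $\frac{q}{\vert q \vert^3} - \frac{\hat{q}}{\vert \hat{q} \vert^3} - D\big(q/\vert q \vert^3\big) w$ and the perturbation $\nabla U$, and close a contraction or weighted-norm estimate in a class of functions vanishing sufficiently fast at $t = 0$. Everything else in the argument is comparatively soft.
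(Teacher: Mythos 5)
Your reduction is correct and your asymptotic bookkeeping is sound: $\hat q(t):=q(-t)$ is again a generalized solution with the same energy and the same (two-sided) collision direction $\xi$, the Sperling expansions do show that the singular parts of $\dot q(t)$ and $\dot q(-t)$ cancel so that $w=q-\hat q$ satisfies $w(t)\to 0$ and $\dot w(t)\to 0$ as $t\to 0^+$, and your indicial exponents $\{4/3,-1/3\}$ and $\{2/3,1/3\}$ are the right ones. But the proof is not complete: everything hinges on the uniqueness of the ejection solution with prescribed direction and energy, and this is exactly the step you concede you cannot close. Because the linearized equation has the borderline Euler coefficient $c/t^2$, neither Gronwall nor a naive contraction applies; one would need a genuine weighted-norm fixed-point argument controlling the nonlinear remainder of the Kepler term and the $\nabla U$ perturbation in a space of functions vanishing like $t^{4/3}$, plus a rigorous version of the claim that the surviving $t^{4/3}$ mode is transversal to the fixed-energy constraint. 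As it stands, the proposal identifies the obstruction rather than overcoming it.

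The paper sidesteps this singular-ODE analysis entirely by \emph{regularizing} the collision. It introduces the Sundman time $s(t)=\int_0^t d\tau/\vert q(\tau)\vert$ and the auxiliary variables $u=q\circ t(s)$, $v=(\vert q\vert\dot q)\circ t(s)$, $w=\bigl(-\mu q/\vert q\vert+\langle q,\dot q\rangle\dot q\bigr)\circ t(s)$; a computation using the equation and the energy relation shows that $z=(u,v,w)$ solves $z'=F(z)$ with $F$ \emph{smooth}, and the Sperling estimates show that $z$ extends continuously to $s=0$ with a definite value $z_0$. Uniqueness is then just Picard--Lindel\"of for the regularized Cauchy problem, and the reflection identity follows from the reversibility symmetry $F(u,-v,w)=(-F_1,F_2,-F_3)(u,v,w)$ together with the oddness of $t(s)=\int_0^s\vert u\vert$. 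If you want to salvage your route, the cleanest fix is to import precisely this change of variables: it converts your ``uniqueness of the ejection solution'' into a statement about a nonsingular vector field, where the critical $1/t^2$ scaling that blocks your argument simply disappears.
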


\begin{proof}
As proved in \cite{Spe69}, it holds that
\begin{equation}\label{col1}
q(t) = \left( \frac92 \mu\right)^{1/3} \vert t \vert^{2/3} \,\xi + \mathcal{O}\left( \vert t \vert^{4/3}\right), \qquad t \to 0,
\end{equation}
and
\begin{equation}\label{col2}
\dot q(t) = \frac23 \left( \frac92 \mu\right)^{1/3} t ^{-1/3} \, \xi + \mathcal{O}\left( \vert t \vert^{1/3}\right), \qquad t \to 0,
\end{equation}
where $\xi = \lim_{t \to 0}\frac{q(t)}{\vert q(t) \vert}$.
Based on this, we first define the Sundman integral
$$
s(t) = \int_0^t \frac{d\tau}{\vert q(\tau) \vert}, \qquad t \in (-\varepsilon,\varepsilon), 
$$
and we set, for $s$ in a neighborhood of zero,
$$
u(s) = q(t(s)),
$$
where $t(s)$ denotes as usual the inverse of $s(t)$; incidentally, notice that
\begin{equation}\label{def-d}
t(s) = \int_0^s \vert u(\sigma) \vert \,d\sigma.
\end{equation}
Then, we further set
\begin{align*}
v(s) & = (\vert q \vert \dot q) \circ t (s)\\
w(s) & = \left(- \frac{\mu q}{\vert q \vert} + \langle q, \dot q \rangle \dot q \right) \circ t (s).
\end{align*} 
Notice that the function $z(s) = (u(s),v(s),w(s))$ is defined on a punctured neighborhood of zero and is smooth therein.
Elementary computations, using the differential equation and the energy relation, show that, for any $s \neq 0$, 
\begin{align*}
u'(s) & = (\vert q \vert \dot q) \circ t(s)\\
v'(s) & = \left(- \frac{\mu q}{\vert q \vert} + \langle q, \dot q \rangle \dot q + \vert q \vert^2 \nabla U(q)\right) \circ 
t (s) \\
w'(s) & = \Big[\langle q, \dot q \rangle \vert q \vert \nabla U(q) + \big(2h + 2U(q) + \langle q, \nabla U(q) \rangle \big) \vert q \vert \dot q \Big] \circ t(s).
\end{align*} 
Writing the right-hand sides in terms of $(u,v,w)$, we thus see that $z(s)$
satisfies the differential equation
$$z'(s) = F(z(s)),\qquad s \neq 0,$$
where the vector field $F = (F_1,F_2,F_3)$ is given by
\begin{align*}
F_1(z) & = v\\
F_2(z) & = w + \vert u \vert^2 \nabla U(u)\\
F_3(z) & = \langle u, v \rangle \nabla U(u) + \big(2h + 2U(u)+ \langle u, \nabla U(u) \rangle \big) v.
\end{align*}
On the other hand, \eqref{col1} and \eqref{col2} readily imply that $z(s)$ can be continuously extended to $s = 0$, with
$$
z(0) = \left(0,0,\left[ -\mu + \frac49 \left(\frac92 \mu \right)^{2/3}\right]\xi \right) =: z_0.
$$
Therefore, $z(s)$ turns out to be a local solution of the Cauchy problem
$$
z' = F(z), \qquad z(0) = z_0;
$$
since the vector field $F$ satisfies
\begin{align*}
F_1(u,-v,w) & = -F_1(u,v,w)\\
F_2(u,-v,w) & = F_2(u,v,w)\\
F_3(u,-v,w) & = -F_3(u,v,w)
\end{align*}
we see that it has to be $u(s) = u(-s)$ for any $s$. Recalling \eqref{def-d}, we obtain $t(s) = -t(s)$, finally implying the conclusion.
\end{proof}

In view of Proposition \ref{dir}, it becomes of interest to investigate under which conditions generalized solutions actually have coincident incoming and outgoing collision direction. To present our results in this direction, we consider a sequence of \emph{classical} solutions $q_n: I \to \Omega$ of the equation
\begin{equation}\label{kepn}
\ddot q_n = - \frac{\mu q_n}{\vert q_n \vert^3} - \frac{2 \ve_n \mu q_n}{\vert q_n \vert^4} + \nabla U(q_n),
\end{equation}
where $\ve_n \geq 0$ and $\ve_n \to 0^+$; we also assume that the associated energy is independent of $n$, namely, there exists $h \in \mathbb{R}$ such that
\begin{equation}\label{kepen}
\frac{1}{2}\vert \dot q_n(t) \vert^2 - \frac{\mu}{\vert q_n(t) \vert} -  \frac{\ve_n \mu}{\vert q_n(t) \vert^2} - U(q_n(t)) \equiv h, 
\quad \mbox{ for any } n \geq 1.
\end{equation}
Of course, for $\ve_n = 0$ we are simply considering a family of classical solutions of the perturbed Kepler problem \eqref{kep}. On the other hand, the choice $\ve_n > 0$ allows us to deal with the solutions given by Proposition (notice indeed $q_n := x_{\beta_n} - c_i$ satisfies an equation like \eqref{kepn} in a sufficiently small neighborhoof of the origin). From now on, we will actually consider generalized solutions to \eqref{kep} arising as limits (in a suitable topology) of the above solutions $q_n$. We start with the following preliminary result.

\begin{lemma}\label{lem_pre}
Let $q_\infty: I \to \Omega$ be an $H^1$-function such that $q_n \to q_\infty$ weakly in $H^1(I)$. Then, $q_\infty$ is a generalized solution to \eqref{kep} and $q_\infty^{-1}(0)$ is a finite set. 
\end{lemma}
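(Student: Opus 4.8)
The plan is to prove the two assertions separately. That $q_\infty$ is a generalized solution is essentially a compactness argument, upgrading the weak convergence away from collisions; the finiteness of $q_\infty^{-1}(0)$ is the substantive point and will follow from a Lagrange--Jacobi identity in which the regularizing terms cancel, yielding a uniform convexity estimate for $\vert q_n \vert^2$ near the origin.

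First I would record that, by the compact embedding $H^1 \hookrightarrow \mathcal{C}^0$ on bounded intervals, $q_n \to q_\infty$ uniformly on every compact subinterval; in particular $q_\infty$ is continuous and $Z := q_\infty^{-1}(0)$ is closed. On any compact $J \subset I \setminus Z$ one has $\vert q_\infty \vert \geq 2\rho > 0$, hence $\vert q_n \vert \geq \rho$ on $J$ for $n$ large; then \eqref{kepen} bounds $\vert \dot q_n \vert$ and the right-hand side of \eqref{kepn} uniformly on $J$ (using $\varepsilon_n \to 0$ and $U \in \mathcal{C}^\infty$), so that $q_n$ is bounded in $\mathcal{C}^2(J)$. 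By Ascoli--Arzel\`a the convergence improves to $\mathcal{C}^1_{\textnormal{loc}}(I \setminus Z)$, and passing to the limit in \eqref{kepn} and \eqref{kepen} (the $\varepsilon_n$-terms vanishing) shows that $q_\infty \in \mathcal{C}^\infty(I \setminus Z)$ solves \eqref{kep} there and preserves the energy with the same constant $h$. It then only remains to prove that $Z$ is finite (which in particular gives the required zero measure).

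The key computation is the following: setting $g_n = \vert q_n \vert^2$ and differentiating twice, then using \eqref{kepn} together with \eqref{kepen} to eliminate $\vert \dot q_n \vert^2$, a direct calculation gives, after the cancellation of all the $\varepsilon_n$-terms,
\[
\ddot g_n = \frac{2\mu}{\vert q_n \vert} + 4h + 4\, U(q_n) + 2\, q_n \cdot \nabla U(q_n).
\]
Since $U$ and $\nabla U$ are bounded on a closed ball $\overline{B_{\delta_0}(0)} \subset \Omega$, after possibly shrinking $\delta_0$ I obtain a uniform convexity estimate $\ddot g_n \geq 1$ on $\{\vert q_n \vert \leq \delta_0\}$, and likewise $\ddot g_\infty \geq 1$ on $\{\vert q_\infty \vert \leq \delta_0\} \cap (I \setminus Z)$; crucially, this bound is independent of $n$ and of $\varepsilon_n$. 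From here finiteness follows in two steps. First, $Z$ has empty interior: if $q_\infty \equiv 0$ on some $[c,d]$, then $q_n \to 0$ uniformly there, so for large $n$ the function $g_n$ is nonnegative with $\ddot g_n \geq 1$ on $[c,d]$, whence convexity forces $g_n(c) + g_n(d) \geq \tfrac14 (d-c)^2$, contradicting $g_n(c), g_n(d) \to 0$. Second, collisions are isolated: were some $\bar t \in Z$ an accumulation point of $Z$, I would fix $\eta$ with $\vert q_\infty \vert < \delta_0$ on $(\bar t - \eta, \bar t + \eta)$; since $Z$ has empty interior and accumulates at $\bar t$, this neighborhood contains a bounded connected component $(\alpha, \beta)$ of $I \setminus Z$ with $\alpha, \beta \in Z$, so that $g_\infty(\alpha) = g_\infty(\beta) = 0$ and $g_\infty > 0$ on $(\alpha, \beta)$; but $\ddot g_\infty \geq 1$ makes $g_\infty$ strictly convex there, hence strictly below its (identically zero) chord, a contradiction. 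Thus $Z$ has no accumulation point in $I$; being closed and discrete, it is finite.

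The main obstacle, and the heart of the lemma, is exactly the identity for $\ddot g_n$: one must verify that the singular regularizing contributions $\pm\, \varepsilon_n \mu / \vert q_n \vert^2$ cancel, for this is what makes the convexity estimate uniform in $n$ and thus transferable to the limit $q_\infty$. The remaining compactness and convexity arguments are then routine, the only care being the bookkeeping needed to locate a bona fide bounded component $(\alpha,\beta)$ of $I \setminus Z$ in order to run the strict-convexity contradiction.
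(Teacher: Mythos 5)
Your proof is correct, and the central computation coincides with the paper's: both hinge on the Lagrange--Jacobi identity for the moment of inertia, in which the regularizing terms $\pm\,\varepsilon_n\mu/\vert q_n\vert^2$ cancel exactly, yielding a convexity estimate for $\vert q_n\vert^2$ near the origin that is uniform in $n$ (the paper writes $\tfrac{d^2}{dt^2}\tfrac12\vert \dot q_n\vert^2$, evidently a typo for $\tfrac{d^2}{dt^2}\tfrac12\vert q_n\vert^2$; your identity $\ddot g_n = 2\mu/\vert q_n\vert + 4h + 4U + 2\,q_n\cdot\nabla U(q_n)$ is the correct one, up to the factor $2$). Where you genuinely diverge is in how the degenerate alternative ``$q_\infty\equiv 0$ on an interval'' is excluded. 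The paper first proves that $q_\infty^{-1}(0)$ has measure zero by an independent argument: Fatou's lemma applied to the energy relation gives $\int_I \mu/\vert q_\infty\vert\,dt < \infty$, and this measure-zero fact is then invoked to discard the degenerate branch of the convexity dichotomy. You bypass Fatou entirely and rule out the degenerate case quantitatively, via the chord estimate $g_n(c)+g_n(d)\ge \tfrac14(d-c)^2$ forced by $\ddot g_n\ge 1$, which contradicts $g_n(c),g_n(d)\to 0$. Your route is more self-contained (a single mechanism, convexity, does all the work) and avoids the lower-semicontinuity bookkeeping; the paper's route yields the slightly stronger byproduct $1/\vert q_\infty\vert\in L^1(I)$, which is convenient elsewhere (e.g.\ for the Sundman change of time in Proposition \ref{dir}) but is not needed for the statement of the lemma. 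One shared caveat, not a flaw specific to your argument: both proofs deduce finiteness from ``closed and discrete,'' which presupposes $I$ compact; this is the intended setting.
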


\begin{proof}
To prove that $q_\infty$ is a generalized solution to \eqref{kep} we just need to check that $q_\infty^{-1}(0)$ has zero measure. To this end, we use the energy relation \eqref{kepen} together with Fatou's lemma to write
\begin{align*}
\int_I \frac{\mu}{\vert q_\infty(t) \vert} \,dt & \leq \liminf_{n \to +\infty} \int_I \frac{\mu}{\vert q_n(t) \vert} \\
& \leq \liminf_{n \to +\infty} \int_I \left(\frac{1}{2}\vert \dot q_n(t) \vert^2 -  \frac{\ve_n \mu}{\vert q_n(t) \vert^2} - U(q_n(t)) - h \right) \,dt.
\end{align*}
Since $U(q_n)$ is $L^\infty$-bounded by uniform convergence and $\int_I \vert \dot q_n \vert^2$ is bounded by weak $H^1$-convergence, we see that the above quantity is finite, thus implying the conclusion. 

To prove that $q_\infty^{-1}(0)$ is a actually a finite set, we are going to show 
that collisions are isolated, namely, if $q_\infty(t_0) = 0$ for some $t_0 \in I$ then $q_\infty(t) \neq 0$ in a suitable punctured neighborhood of $t_0$. To this end, we preliminarily fix $r_0 > 0$ such that
$$
\frac{\mu}{\vert q \vert} + 2U(q) + \langle q, \nabla U(q) \rangle + 2h > 0, \quad \mbox{ for every } \vert q \vert \leq r_0.
$$
In view of the uniform convergence of $q_n$ to $q_\infty$, there exists an interval $J \subset I$ with $t_0 \in J$ such that
$\min_{t \in J} \vert q_n(t) \vert \leq r_0$ for any $n$ large enough. An easy computation shows that
$$
\frac{d^2}{dt^2} \frac12 \vert \dot q_n(t) \vert^2 = \frac{\mu}{\vert q_n(t) \vert} + 2U(q_n(t)) + \langle q_n(t), \nabla U(q_n(t)) \rangle + 2h
$$
for any $n$ large enough and for any $t \in J$, implying that the function $t \mapsto \vert q_n(t) \vert^2$ is strictly convex on $J$ for large $n$. Again by uniform convergence, $t \mapsto \vert q_\infty(t) \vert^2$ is convex on $J$, showing that either
$q_\infty \equiv 0$ on a neighborhood of $t_0$ or $q_\infty(t) \neq 0$ for every $t \in J \setminus \{t_0\}$. Since the first case cannot occur because the set of collisions has zero measure, we are done.
\end{proof}

\begin{remark}
It is worth mentioning that the choice of exponent $\beta = 2$ for the penalization term $\ve_n \vert q_n \vert^{-\beta}$ is crucial,
this being the unique value such that both a strong force assumption is satisfied (this being needed for the min-max argument of Section
\ref{sec2}) and the above Lagrange-Jacobi argument is possible.  
\end{remark}

In view of Lemma \ref{lem_pre}, we can perform a local analysis around each singularity. 
The next result is essentially proved in \cite{Tan94} and provides an estimate for the collision directions of a generalized solution $q_\infty$ in terms of the sequence $q_n$.

\begin{proposition}\label{dirtan}
Let $q_\infty: I \to \Omega$ be an $H^1$-function with $q_\infty^{-1}(0) =  \{0\}$ and such that $q_n \to q_\infty$ weakly in $H^1(I)$.
Then, if the limit 
$$
d:= \lim_{n \to +\infty} \frac{\ve_n }{\mu^{1/3} \min_t \vert q_n(t) \vert}
$$
exists finite, the angle between
$$
\lim_{t \to 0^-} \frac{q_\infty(t)}{\vert q_\infty(t) \vert} \qquad \mbox{ and } \qquad \lim_{t \to 0^+} \frac{q_\infty(t)}{\vert q_\infty(t) \vert} 
$$
is equal to $2\pi \sqrt{1+ d}$.
\end{proposition}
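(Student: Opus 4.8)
The plan is to follow the blow-up strategy of \cite{Tan94}: reduce the analysis near the collision to a limiting central-force problem and read off the rotation angle from Binet's equation. As a first step I would reduce to a planar central motion. Since both the Keplerian force $-\mu q_n/\vert q_n\vert^3$ and the penalizing force $-2\ve_n\mu q_n/\vert q_n\vert^4$ are central, the angular momentum $A_n = q_n\wedge\dot q_n$ obeys $\dot A_n = q_n\wedge\nabla U(q_n)$, which is small wherever $\vert q_n\vert$ is small (recall $U\in\mathcal C^\infty$ near the origin). Thus, close to its perihelion, $q_n$ moves essentially in a fixed plane under the central field with potential $\mu/r + \ve_n\mu/r^2$. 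Writing $\delta_n := \min_t\vert q_n(t)\vert = \vert q_n(\tau_n)\vert$ and evaluating \eqref{kepen} at $\tau_n$ (where $\dot q_n\perp q_n$), one finds $\vert A_n\vert^2 = 2\mu\delta_n + 2\ve_n\mu + O(\delta_n^2)$; in particular the angular momentum vanishes in the limit, consistently with $q_\infty$ developing a collision at $t=0$.

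Next I would blow up at the perihelion. Exploiting the Keplerian scaling symmetry, set
$$
w_n(s) = \frac{1}{\delta_n}\,q_n\big(\tau_n + \delta_n^{3/2}s\big),
$$
so that $\min_s\vert w_n(s)\vert = \vert w_n(0)\vert = 1$ and $w_n(0)\perp w_n'(0)$. Exactly as in \eqref{CV_vR1}--\eqref{CV_vR2}, the contributions of $\nabla U$ and of the energy $h$ are $O(\delta_n)$, whereas the rescaled penalizing coefficient $2\ve_n\mu/\delta_n$ converges to a finite limit proportional to $d$; this is precisely where the hypothesis on $d$ (and the $\mu^{1/3}$ normalization in its definition, tailored to produce a clean constant) enters. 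One obtains $w_n\to w_\infty$ in $\mathcal C^2_{\mathrm{loc}}$, where $w_\infty$ is an unbounded, planar, \emph{zero-energy} solution of a central Kepler-plus-inverse-square problem, with perihelion $\min\vert w_\infty\vert = 1$ and conserved angular momentum $\ell$ fixed by the zero-energy relation at the perihelion.

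To compute the swept angle, in the plane of $w_\infty$ I would use $u = 1/r$ with $\theta$ as independent variable: Binet's equation linearizes to the harmonic oscillator $u'' + \Omega^2 u = \mathrm{const}$, whose frequency $\Omega$ is determined by the ratio between the inverse-square coefficient and $\ell^2$; using the zero-energy value of $\ell$ one computes $\Omega = (1+d)^{-1/2}$. The orbit $w_\infty$ runs from $u=0$ (as $s\to-\infty$, along a direction $\eta^-$) up to its unique perihelion and back to $u=0$ (as $s\to+\infty$, along $\eta^+$), thereby sweeping the total angle $2\pi/\Omega = 2\pi\sqrt{1+d}$ between $\eta^-$ and $\eta^+$. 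It then remains to identify these asymptotic directions of the blow-up with the collision directions $\xi^\pm = \lim_{t\to 0^\pm} q_\infty(t)/\vert q_\infty(t)\vert$ of $q_\infty$, which gives the claim.

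I expect the delicate point to be precisely this last identification. The blow-up controls the trajectory only on the $\delta_n$-scale, while $\xi^\pm$ are macroscopic limits; one must therefore show that no further rotation of $s_n := q_n/\vert q_n\vert$ (and hence of $q_\infty/\vert q_\infty\vert$) occurs in the intermediate region between the two scales, so that the two pairs of directions coincide. As in Lemma \ref{lem_ids}, this calls for a uniform-in-$n$ estimate forcing $\int\vert\dot s_n\vert$ to be small away from the perihelion, thereby confining essentially all the rotation to the collision region and allowing the relevant limits to be interchanged.
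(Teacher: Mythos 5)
Your proposal follows essentially the same route as the paper's own (sketched) proof: a blow-up $w_n(s)=\delta_n^{-1}q_n(\tau_n+\delta_n^{3/2}s)$ at the perihelion, convergence to a zero-energy solution of the limiting Kepler-plus-inverse-square central problem whose total swept angle gives the claimed value, and the same acknowledged delicate step of matching the asymptotic directions of the blow-up limit with the macroscopic collision directions of $q_\infty$, which the paper likewise does not carry out in detail but delegates to the angular-momentum estimates of \cite[Propositions 1.1--1.2]{Tan94}. Your explicit Binet computation is a welcome addition to what the paper only cites.
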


\begin{proof}[Sketch of the proof]
We argue as in the proof of \cite[Theorem 0.1 (ii)]{Tan94}, using a blow-up technique. Assuming $\delta_n := \min_t \vert q_n(t) \vert = \vert q_n(t_n) \vert$ with $t_n \to 0$, we set
$$
y_n(t) = \frac{1}{\delta_n} q_n\left( \delta_n^{3/2} \,t + t_n \right);
$$ 
then it is not difficult to see that $y_n$ converges uniformly on compact sets to a zero-energy solution $y_{\infty,d}$ of the equation
$$
\ddot y_{\infty,d} = - \frac{\mu y_{\infty,d}}{\vert y_{\infty,d} \vert^3} - \frac{2d \mu^{4/3} y_{\infty,d} }{\vert y_{\infty,d} \vert^4}.
$$
Via some delicate angular momentum estimates, it is possible to show that the collision directions of $q_\infty$ at $t = 0$ can be related to the asymptotic directions of $y_{\infty,d}$ (see \cite[Proposition 1.2]{Tan94}), which in turn are easily computed (see \cite[Proposition 1.1 (iii)]{Tan94}). Notice that all the assumptions in \cite{Tan94} are satisfied in our case, at least in a neighborhood of $q = 0$; even more, some simplifications are here possible with respect to the proof given therein, since we deal with fixed-energy solutions of an autonomous problem. However, the complete argument is still very long and we omit it.
\end{proof}

Combining Proposition \ref{dir} and Proposition \ref{dirtan} clearly suggests a strategy to exclude the occurrence of collisions for a generalized solution: indeed, whenever $d = 0$ the incoming and outgoing collisions directions must coincide so that $q_\infty$ is just a collision-reflection solution (a case which is typically ruled out for some other reasons). We end this appendix with a result presenting two cases (both used in the paper) in which the whole procedure works. In the first one, we simply deal with perturbations of the Kepler problem 
(that is, $\ve_n = 0$). In the second one, a Morse index assumptions is used; in the statement below, by Morse index $\textnormal{j}(q_n)$ of the solution $q_n$ we will mean the Morse index of $q_n$ when regarded as a critical point of the action functional on the space of $H^1$-paths with fixed ends.

\begin{corollary} \label{cor-finalereg}
Let $q_\infty: I \to \Omega$ be an $H^1$-function with $q_\infty^{-1}(0) =  \{0\}$ and such that $q_n \to q_\infty$ weakly in $H^1(I)$.
Then:
\begin{itemize}
\item[-] if $\ve_n = 0$ for every $n$, then $q_\infty$ is a collision-reflection solution,
\item[-] if $\textnormal{j}(q_n) \leq 1$ for every $n$, then $q_\infty$ is a collision-reflection solution.
\end{itemize}
\end{corollary}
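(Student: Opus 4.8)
The plan is to reduce both cases to the single statement that the blow-up parameter
$$
d = \lim_{n \to +\infty} \frac{\ve_n}{\mu^{1/3} \min_t \vert q_n(t) \vert}
$$
vanishes, and then to chain together Propositions \ref{dir} and \ref{dirtan}. Recall first that $q_\infty$ is a generalized solution to \eqref{kep} by Lemma \ref{lem_pre}, and by hypothesis its only collision is at the origin, $q_\infty^{-1}(0) = \{0\}$. Hence, by Proposition \ref{dir}, it suffices to show that $\lim_{t \to 0} q_\infty(t)/\vert q_\infty(t) \vert$ exists, i.e. that the incoming and outgoing collision directions coincide. By Proposition \ref{dirtan} (once $d$ is known to exist and be finite), these two directions enclose the angle $2\pi\sqrt{1+d}$; in particular, for $d = 0$ the angle equals $2\pi$, a full turn, so that the two unit vectors coincide and the required limit exists. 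The whole proof therefore boils down to establishing $d = 0$ in each of the two situations.

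In the first case $\ve_n = 0$ this is immediate: each $q_n$ is a \emph{classical} solution of \eqref{kep}, hence avoids the singularity, so $\min_t \vert q_n(t) \vert > 0$ and the numerator defining $d$ is identically zero. Thus $d = 0$ and the reduction above gives the collision-reflection property.

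In the second case I would argue by contradiction, assuming $d > 0$ (after passing, if necessary, to a subsequence along which $\ve_n/(\mu^{1/3}\min_t\vert q_n\vert)$ converges in $[0,+\infty]$; the degenerate value $+\infty$ is handled analogously, the effect being only a stronger winding). The heart of the matter is the rescaling underlying Proposition \ref{dirtan}: the blown-up sequence $y_n$ converges to the zero-energy solution $y_{\infty,d}$ of the strong-force problem $\ddot y = -\mu y/\vert y\vert^3 - 2 d \mu^{4/3} y/\vert y\vert^4$, whose asymptotic directions enclose the angle $2\pi\sqrt{1+d} > 2\pi$, so that $y_{\infty,d}$ winds strictly more than once around the origin. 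The idea is to convert this excess winding into a Morse-index lower bound: localizing the second variation of the action near the (moving, shrinking) collision and comparing it, after the same rescaling, with the second variation along $y_{\infty,d}$, one exhibits at least two linearly independent directions along which the quadratic form is negative — one coming from the in-plane winding beyond $2\pi$, one from the out-of-plane direction available in $\mathbb{R}^3$. This forces $\textnormal{j}(q_n) \geq 2$ for $n$ large, contradicting the hypothesis $\textnormal{j}(q_n) \leq 1$, exactly in the spirit of the index estimates of \cite{Tan94} and \cite{BosDamTer17}. Hence $d = 0$, and we conclude as in the first case.

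I expect the main obstacle to be precisely this last transfer of the geometric winding of $y_{\infty,d}$ into a quantitative Morse-index lower bound. One has to make rigorous the localization of the second variation near a collision whose location and scale both vary with $n$, control the perturbative error terms coming from $U$ and from the fixed-energy constraint, and check that the two candidate negative directions survive the limit while remaining admissible (fixed-endpoints) variations of $q_n$. This is the same delicate angular-momentum and index analysis that is only sketched in Proposition \ref{dirtan}, and carrying it out in full would be, as noted there, rather long.
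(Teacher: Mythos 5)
Your proposal follows essentially the same route as the paper: reduce both cases to showing that the blow-up parameter $d$ vanishes, deduce from Proposition \ref{dirtan} that the incoming and outgoing collision directions then coincide, and conclude via Proposition \ref{dir}; for the second case the paper likewise argues that $d>0$ would force $\textnormal{j}(q_n)\geq 2$, simply citing \cite[Proposition 1.1 (iv)]{Tan94} for the index bound that you sketch by hand. The argument is correct, modulo the same reliance on the (admittedly delicate) index estimate near a shrinking collision that the paper itself delegates to \cite{Tan94}.
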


\begin{proof}
The first case is obvious, since $\ve_n = 0$ for every $n$ clearly implies $d = 0$. The second case follows
from \cite[Proposition 1.1 (iv)]{Tan94}, since $d > 0$ would imply $\textnormal{j}(q_n) \geq 2$ for large $n$ (some care is needed since
in that paper a periodic boundary value problem is considered; however the constructed variations have compact support so
that the argument fits with our setting as well).
\end{proof}

\end{document}